\theoremstyle{plain}
\newtheorem{theorem}{Theorem}[section]
\newtheorem*{theorem*}{Theorem}
\newtheorem*{theorem**}{Theorem A}
\newtheorem*{theorem***}{Theorem B}
\newtheorem{definition}[theorem]{Definition}
\newtheorem{lemma}[theorem]{Lemma}
\newtheorem{prop}[theorem]{Proposition}
\newtheorem{cor}[theorem]{Corollary}
\newtheorem{rem}[theorem]{Remark}
\newtheorem{example}[theorem]{Example}
\newtheorem*{mt*}{Main Theorem}
\newcommand\SU{\hbox{\rm SU}}
\newenvironment{proofof}[1]{\par
  \pushQED{\qed}%
  \normalfont \topsep6\p@\@plus6\p@\relax
  \trivlist
  \item[\hskip\labelsep
        \bfseries
    Proof of #1\@addpunct{.}]\ignorespaces
}{%
  \popQED\endtrivlist\@endpefalse
}
\newcommand{\del}{\partial}
\newcommand{\delbar}{\overline{\del}}
\title[Some remarks on Hermitian manifolds]{Some remarks  on  Hermitian manifolds satisfying K\"ahler-like conditions}
\author{Anna Fino}
\address[Anna Fino]{Dipartimento di Matematica ``G. Peano'' \\
Universit\`{a} degli studi di Torino \\
Via Carlo Alberto 10\\
10123 Torino, Italy
}
\email{annamaria.fino@unito.it}
\author{Nicoletta Tardini}
\address[Nicoletta Tardini]{Dipartimento di Matematica ``G. Peano'' \\
Universit\`{a} degli studi di Torino \\
Via Carlo Alberto 10\\
10123 Torino, Italy}
\email{nicoletta.tardini@gmail.com}
\curraddr{
Dipartimento di Scienze Matematiche, Fisiche e Informatiche\\
Unit\`a di Matematica e Informatica\\
Universit\`a degli Studi di Parma\\
Parco Area delle Scienze 53/A\\
43124 Parma, Italy
}
\keywords{Bismut connection, curvature, Hermitian metric}
\subjclass[2010]{53C55; 53C05; 22E25; 53C30; 53C44}
\begin{document}

\maketitle

\begin{abstract} We study Hermitian metrics whose Bismut connection  $\nabla^B$ satisfies  the first Bianchi identity  in relation to the SKT condition and the parallelism of the torsion of the Bimut connection. We obtain a characterization of complex surfaces admitting Hermitian metrics whose Bismut connection satisfy the first Bianchi identity and the  condition   $R^B(x,y,z,w)=R^B(Jx,Jy,z,w)$, for every tangent vectors $x,y,z,w$, in terms of Vaisman metrics.  These conditions, also called  Bismut K\"ahler-like,  have been recently studied in \cite{angella-otal-ugarte-villacampa, zhao-zheng, yau-zhao-zheng}.
Using the characterization of SKT almost abelian Lie groups in \cite{arroyo-lafuente},  we construct new examples of Hermitian manifolds satisfying  the  Bismut K\"ahler-like condition. Moreover, we prove   some   results  in relation to  the pluriclosed flow on complex surfaces and  on almost abelian Lie groups. In particular, we show that, if the initial metric has constant scalar curvature, then
the pluriclosed flow preserves the Vaisman condition on complex surfaces.

\end{abstract}

\section{Introduction}

Given a Hermitian manifold $(X,J, g),$  the Bismut connection $\nabla^B$  is the unique connection on $X$ that is Hermitian (i.e.  such that $\nabla^B g =0$ and $\nabla^B J =0$)  and has totally skew-symmetric torsion tensor (cf. \cite{bismut}). If the torsion $3$-form of $\nabla^B$   is closed, the Hermitian metric $g$ is called {\em SKT} or {\em pluriclosed}.
Examples of  SKT  manifolds  are given by Lie groups (and its compact quotients)  endowed  with left-invariant  SKT  Hermitian structures (see for instance \cite{FPS}, \cite{Ugarte},  \cite{FOU}).   A characterization of  almost abelian  Lie groups, i.e. Lie groups whose  Lie algebra has a codimension- one abelian ideal,    admitting  left-invariant SKT metrics   has been recently obtained in \cite{arroyo-lafuente}.

In \cite{yang-zheng-kahler-like}  the authors studied  Hermitian metrics whose Levi-Civita and Chern connection have curvature tensors satisfying  all the symmetry conditions of a K\"ahler metric.
Hermitian metrics with the Bismut  connection being \lq \lq K\"ahler-like", namely, satisfying the first Bianchi identity and the condition $R^B(x,y,z,w)=R^B(Jx,Jy,z,w)$, for every tangent vectors $x,y,z,w$, have been studied,  in \cite{angella-otal-ugarte-villacampa},     investigating this property on  6-dimensional solvmanifolds with holomorphically trivial canonical bundle. 

In \cite{zhao-zheng}  Zhao and Zheng  show that if the curvature tensor of the Bismut connection  satisfies the symmetry conditions  
\begin{equation}\label{condition-skl}
R^B(x,y,z,w) = R^B(z,y,x,w)\,,\quad R^B(x,y,Jz,Jw) = R^B(x,y,z,w)
\end{equation}
for any tangent vectors $x, y, z, w$ in  $X$, then the  Hermitian metric must be SKT.
In \cite{yau-zhao-zheng}  a classification for compact non-K\"ahler Hermitian manifolds satisfying \eqref{condition-skl} in complex dimension $3$ and those with degenerate torsion in higher dimensions is given.\\

An evolution equation of SKT metrics is given by the  pluriclosed flow, introduced by Streets and Tian in \cite{streets-tian}. In recent years the pluriclosed flow has been an active subject of study, and already many regularity and convergence results have been proved in \cite{streets-tian-2, streets-tian-3}.  A natural question is to see  if the Bismut K\"ahler-like condition is preserved by the flow.

In this paper we study  Hermitian metrics whose Bismut connection  $\nabla^B$ satisfies  the first Bianchi identity in relation to the SKT condition and the parallelism of the torsion of the Bismut connection.
In particular, as a consequence of Theorem \ref{thm:skt-bianchi-then-parallel} and Theorem \ref{thm:parallel-bianchi-iff-skt} we show that, if  $X$ is  a complex manifold and $g$ is  a Hermitian metric such that the Bismut connection satisfies the first Bianchi identity, then, 
$$
\nabla^BT^B=0 \quad\iff\quad g \text{ is SKT.}
$$

Moreover, we show in Proposition \ref{prop:deformations} that the existence of these metrics is not open under small deformations of the complex structure.\\
Specializing to complex dimension $2$ we are able to characterize these metrics as follows 
\begin{theorem**} \label{thm:characterization-vaisman}
Let $X$ be a complex surface and $g$ be a Hermitian metric. Then, $g$ is Vaisman if and only if $g$ is a
SKT metric and the Bismut connection satisfies the first Bianchi identity. 
\end{theorem**}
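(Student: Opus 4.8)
The plan is to prove Theorem A by establishing both implications, leaning heavily on the general result already available in the excerpt: for a Hermitian metric whose Bismut connection satisfies the first Bianchi identity, $\nabla^B T^B = 0$ is equivalent to being SKT. This lets me recast the statement. A Vaisman metric is, by definition, a locally conformally K\"ahler metric whose Lee form is parallel with respect to the Levi-Civita connection; in complex dimension $2$ this is an especially rigid class. So the goal becomes: on a complex surface, $g$ is Vaisman if and only if $g$ is SKT with $\nabla^B T^B = 0$.

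For the forward direction I would start from a Vaisman surface and show it is SKT with Bianchi identity. The key is that for Vaisman metrics the Lee form $\theta$ is parallel for the Levi-Civita connection, which forces strong constraints on the torsion $3$-form $T^B = -d^c\omega = J d\omega$. In complex dimension $2$ one has $d\omega = \theta \wedge \omega$, so $T^B$ is controlled entirely by $\theta$ and $\omega$. First I would write $T^B$ explicitly in terms of $\theta$ and $J$, then use $\nabla^{LC}\theta = 0$ together with the relation between $\nabla^B$ and $\nabla^{LC}$ (they differ by $\tfrac12 T^B$) to verify directly that $\nabla^B T^B = 0$. Closedness of $T^B$ (the SKT condition) should then follow from $d\theta = 0$, which holds for Vaisman metrics, by differentiating the expression $d\omega = \theta\wedge\omega$. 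Once SKT and $\nabla^B T^B = 0$ are both in hand, the cited Theorem \ref{thm:parallel-bianchi-iff-skt} (or its converse companion) supplies the first Bianchi identity.

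For the converse I would assume $g$ is SKT with the Bismut connection satisfying the first Bianchi identity, so by the quoted equivalence $\nabla^B T^B = 0$. On a complex surface the torsion $3$-form is a real $3$-form, hence of the shape $\theta\wedge\omega$ for some real $1$-form; this identifies $\theta$ with the Lee form. The heart of the argument is to upgrade $\nabla^B T^B = 0$ into parallelism of $\theta$ for the Levi-Civita connection. I would expand $\nabla^B T^B = 0$ using $\nabla^B = \nabla^{LC} + \tfrac12 T^B$, extract from it that $\nabla^{LC}\theta$ must vanish (here the low dimension is essential, since it pins down the algebraic form of $T^B$ and removes the freedom that would exist in higher dimensions), and combine this with the SKT condition $dT^B = 0$ to deduce $d\theta = 0$. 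Parallel Lee form plus closed Lee form give exactly the Vaisman condition.

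The main obstacle will be the converse direction, specifically extracting $\nabla^{LC}\theta = 0$ from $\nabla^B T^B = 0$. This requires carefully unwinding the tensorial identity $\nabla^B T^B = 0$ into components and showing that in real dimension $4$ the only solutions force the Lee form to be Levi-Civita parallel; the computation must exploit both the skew-symmetry of $T^B$ and the special algebra of $2$-forms on a $4$-manifold (self-dual/anti-self-dual splitting), which is where the dimensional restriction does the real work. I expect the SKT hypothesis to be needed precisely to control the exact part and guarantee $d\theta = 0$ rather than merely $\theta$ being parallel, closing the loop to the full Vaisman condition.
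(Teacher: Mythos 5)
Your overall route is the same as the paper's: reduce everything to the equivalence, under the first Bianchi identity, between $\nabla^B T^B=0$ and the SKT condition (Theorems \ref{thm:skt-bianchi-then-parallel} and \ref{thm:parallel-bianchi-iff-skt}), use that on a complex surface $T^B=-*\theta$ so that parallelism of the torsion is parallelism of the Lee form, and compare $\nabla^B T^B$ with $\nabla^{LC}T^B$. The paper imports the identity $\nabla^B T^B=\nabla^{LC}T^B$ on surfaces from Agricola--Ferreira (equivalently, the vanishing of the correction term $\sigma_{x,y,z}\,g(T^B(x,y),T^B(z,u))$ in four real dimensions), whereas you propose to verify it by hand; that computation does go through, so this part of your plan is sound, if more laborious.

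There is, however, one step that fails as written. In the forward direction you claim that the SKT condition ``follows from $d\theta=0$ \ldots by differentiating the expression $d\omega=\theta\wedge\omega$.'' Differentiating that identity only gives $d\theta\wedge\omega=0$, which is vacuous once $d\theta=0$; it says nothing about $dT^B=dd^c\omega$. On a surface SKT is the Gauduchon condition $d^*\theta=0$, which is a statement about the codifferential and is \emph{not} implied by closedness of the Lee form (conformal rescalings of a Vaisman metric are lck with closed Lee form but generically not Gauduchon). The correct source of coclosedness is the hypothesis you already have: $\nabla^{LC}\theta=0$ forces $d^*\theta=-\sum_i(\nabla^{LC}_{e_i}\theta)(e_i)=0$, which is exactly how the paper disposes of this point (``Vaisman metrics are Gauduchon''). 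A similar slight misattribution occurs in your converse, where you invoke $dT^B=0$ to get $d\theta=0$; in fact $d\theta=0$ is the antisymmetrization of $\nabla^{LC}\theta=0$, while the SKT hypothesis is needed earlier, to obtain $\nabla^BT^B=0$ from the Bianchi identity and to kill the correction terms between $\nabla^B T^B$ and $\nabla^{LC}T^B$. Both issues are local and repairable, but as stated those two steps do not prove what you assert.
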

Where we recall that a Vaisman metric $\omega$ on a complex manifold $X$ is a Hermitian metric satisfying  $d\omega=\theta\wedge\omega$ for some $d$-closed $1$-form $\theta$ with $\nabla^{LC}\theta=0$.

In \cite{belgun-threefolds} a generalization of Vaisman metrics, called metrics with Lee potential, is introduced, and in subsection \ref{subs:LP} we study this condition in relation with the SKT condition.

In Section \ref{sect:almost-abelian} we  construct  new examples of Hermitian manifolds  satisfying  the Bismut K\"ahler-like condition, using the characterization of   SKT simply connected  almost abelian Lie groups. In order to do this we compute explicitly the components of the Bismut connection.
Moreover, we give conditions on the structure equations of almost abelian Lie groups in order to have K\"ahler and flat Hermitian metrics.

In the last Section we study  the   Bismut K\"ahler-like condition  in relation to the pluriclosed flow, discussing its behavior on complex dimension $2$ and  on almost abelian  Lie groups.
In particular, we prove the following 
\begin{theorem***}\label{theorem-b}
Let $X$ be a compact complex surface admitting a Vaisman metric $\omega_0$ with constant scalar curvature, then the pluriclosed flow starting with $\omega_0$ preserves the Vaisman condition.
\end{theorem***}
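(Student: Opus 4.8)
The plan is to combine the surface-specific reformulation of the Vaisman condition with the fact, due to Streets and Tian \cite{streets-tian}, that the pluriclosed flow preserves the SKT condition, and then to control the Lee form along the flow. Throughout, write $\omega_t$ for the evolving metric and $g_t$ for the associated Riemannian metric.

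First I would use that on a complex surface the wedge product with $\omega$ is an isomorphism from $1$-forms to $3$-forms, so each $\omega_t$ has a well-defined Lee form $\theta_t$ determined by $d\omega_t=\theta_t\wedge\omega_t$, and the SKT (pluriclosed) condition is equivalent to the Gauduchon condition that $\theta_t$ be coclosed for $g_t$. Since $\omega_0$ is Vaisman it is in particular SKT, and as the pluriclosed flow preserves the SKT condition every $\omega_t$ is SKT. By Theorem A applied in complex dimension two, a metric is Vaisman precisely when it is SKT and its Bismut connection satisfies the first Bianchi identity, which in this setting is equivalent to the parallelism $\nabla^{LC}_{g_t}\theta_t=0$ of the Lee form. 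Hence it suffices to show that $\nabla^{LC}_{g_t}\theta_t=0$ persists along the flow once it holds at $t=0$.

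Writing the pluriclosed flow as $\partial_t\omega_t=-(\rho^B_t)^{1,1}$, where $\rho^B_t$ denotes the Bismut--Ricci form of $g_t$, I would differentiate $d\omega_t=\theta_t\wedge\omega_t$ to obtain $(\partial_t\theta_t)\wedge\omega_t=-\,d(\rho^B_t)^{1,1}-\theta_t\wedge\partial_t\omega_t$, which determines $\partial_t\theta_t$ after inverting $\wedge\,\omega_t$. The central computation is to evaluate $(\rho^B)^{1,1}$ for a Vaisman metric on a surface in terms of the parallel data $(\theta,J\theta)$, the transverse K\"ahler geometry of the foliation defined by the Lee field, and the scalar curvature, using that on a Vaisman manifold the Bismut torsion is parallel. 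Here the constant scalar curvature hypothesis is decisive: it forces the Ricci-type terms that would otherwise destroy the parallelism of $\theta_t$ to be constant, so that $-(\rho^B)^{1,1}$ is a combination of $\omega$ and of the vertical form $\theta\wedge J\theta$ with constant coefficients, i.e. a deformation tangent to the class of Vaisman metrics with constant scalar curvature. Note that a pure rescaling $\omega_t=c(t)\omega_0$ cannot in general solve the flow, since the Bismut connection, hence $\rho^B$, is invariant under constant rescalings; this is why the vertical term $\theta\wedge J\theta$ must genuinely enter.

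The step I expect to be the main obstacle is precisely this persistence: proving that $\nabla^{LC}_{g_t}\theta_t=0$ is genuinely invariant under the flow and not merely an initial condition. The clean route is to treat the Vaisman structure as an ansatz, evolving $\omega_t$ within the class of metrics with parallel Lee form and constant scalar curvature, which after normalizing the transverse K\"ahler representative is governed by the two scales (vertical and transverse); one then verifies, via the computation of $\rho^B$ above together with the constant scalar curvature hypothesis, that the right-hand side $-(\rho^B)^{1,1}$ stays tangent to this family, and solves the resulting reduced ordinary differential evolution of the two scales, which manifestly keeps each $\omega_t$ Vaisman. Short-time existence and uniqueness for the pluriclosed flow then identify this Vaisman solution with the actual solution emanating from $\omega_0$, and a final appeal to Theorem A repackages ``SKT with parallel Lee form'' as the Vaisman condition for every $t$, completing the argument.
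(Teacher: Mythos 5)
Your final strategy---evolve within an explicit ansatz of Vaisman metrics built from the decomposition $\omega=\frac{1}{|\theta|^2}(\theta\wedge J\theta-dJ\theta)$, check that $-(\rho^B)^{1,1}$ is tangent to this family, reduce the flow to an ODE in the scale(s), and invoke uniqueness of the pluriclosed flow---is exactly the route the paper takes (there only the transverse coefficient of $dJ\theta_0$ moves, via a single function $f(t)$ with $f(0)=1$). You also correctly identify that constant scalar curvature is what keeps the right-hand side tangent to the family, and your observation that a pure rescaling $\omega_t=c(t)\omega_0$ cannot solve the flow because $\rho^B$ is invariant under constant rescalings is accurate and explains why the transverse part must deform relative to the vertical one.

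The gap is that the decisive computation is asserted rather than performed. Everything hinges on showing that for a Vaisman metric on a surface one has $\rho^B=h\,dJ\theta$ with $h$ a \emph{constant} exactly when the scalar curvature is constant (whence also $c_1(X)=0$, which is needed for $-(\rho^B_t)^{1,1}$ to integrate to a path inside the ansatz family). The paper isolates this as Lemma \ref{lemma:chern-ricci-vaisman}, and its proof is not routine: it uses the coincidence $\rho^B=\rho^W$ of the Bismut and Weyl Ricci forms for Vaisman metrics (Alexandrov--Ivanov), the comparison of $\mathrm{Ric}^W$ with $\mathrm{Ric}^{LC}$, and $\nabla^{LC}\theta=0$ to get $\rho^W(\theta^\sharp,J\theta^\sharp)=0$, hence $\rho^W=h\,\xi\wedge J\xi=h\,dJ\theta$, and finally a trace identity to convert constancy of the scalar curvature into constancy of $h$. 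Your proposal names the ingredients but does not supply this argument, and without it the claim that $-(\rho^B)^{1,1}$ has constant coefficients in the $(\omega,\theta\wedge J\theta)$ frame is unproven. A second, smaller omission: you must still verify that the deformed ansatz metrics are themselves Vaisman for $J$ (their Lee form is $\theta_0/f(t)$, but parallelism has to hold for the \emph{new} Levi-Civita connection); the paper settles this by noting the Lee field is a constant multiple of the holomorphic field $\theta_0^{\sharp_0}$ and citing Moroianu--Moroianu--Ornea, rather than by your suggested appeal to Theorem A, which would presuppose the very parallelism you are trying to establish.
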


\medskip
\noindent{\sl Acknowledgments.}  The authors would like to thank Luigi Vezzoni and Mihaela Pilca for useful discussions and suggestions.
The authors would like to thank also Quanting Zhao and Fangyang Zheng for useful comments.
The authors are grateful to Jeffrey Streets for pointing out an inaccuracy in the first version of Theorem B.
The authors are supported by
Project PRIN 2017 ``Real and complex manifolds: Topology, Geometry and Holomorphic Dynamics'',
by project SIR 2014 AnHyC ``Analytic aspects in complex and hypercomplex geometry'' code RBSI14DYEB, and by GNSAGA of INdAM.

\section{Preliminaries}

Let $(X,J)$ be a complex manifold of complex dimension $n$ and let $g$ be a Hermitian metric on $X$ with associated fundamental form $\omega(\cdot\,,\cdot)=g(\cdot,J\,\cdot)$\,. An affine connection is called Hermitian if it preserves the metric $g$ and the complex structure $J$. In particular, Gauduchon in \cite{gauduchon-bumi} proved that there exists an affine line $\left\lbrace\nabla^t\right\rbrace_{t\in\mathbb{R}}$ of canonical Hermitian connections, passing through the \emph{Chern connection} and the \emph{Bismut connection}; these connections are completely determined by their torsion. 
Let $\nabla$ be a Hermitian connection and $T(x,y)=\nabla_xy-\nabla_yx-[x,y]$ be its torsion, we denote with the same symbol
$$
T(x,y,z):=g(T(x,y),z).
$$
Then the Chern connection $\nabla^{Ch}$ is the unique Hermitian connection whose torsion has trivial $(1,1)$-component and the Bismut connection (also called Strominger connection) $\nabla^B$ is the unique Hermitian connection with totally skew-symmetric torsion. In particular, the torsion of the Bismut connection satisfies
$$
T^B(x,y,z)=d^c\omega(x,y,z)
$$
where $d^c=-J^{-1}dJ$.\\
A Hermitian metric $\omega$ is called \emph{strong K\"ahler with torsion} (\emph{SKT} for brevity) or \emph{pluriclosed} if $T^B$ is a closed $3$-form, namely $dT^B=0$, or equivalently $dd^c\omega=0$.\\
Recall that the trace of the torsion of the Chern connection is equal to the Lee form of $\omega$ (cf. \cite{gauduchon}), that is the $1$-form defined by
$$
\theta=Jd^*\omega
$$
where $d^*$ is the adjoint of the exterior derivative $d$ with respect to $\omega$, or equivalently $\theta$ is the unique $1$-form satisfying
$$
d\omega^{n-1}=\theta\wedge\omega^{n-1}\,.
$$
A Hermitian metric $\omega$ is called \emph{Gauduchon} if
$dd^c\omega^{n-1}=0$, or equivalently
$d^*\theta=0$. In particular, in dimension $2$ Gauduchon and SKT metrics coincide.
We recall the following
\begin{definition}
A Hermitian metric $\omega$ on $X$ is called \emph{locally conformally K\"ahler} (\emph{lck} for brevity) if
$$
d\omega=\alpha\wedge\omega
$$
where $\alpha$ is a $d$-closed $1$-form. In particular, $\alpha=\frac{1}{n-1}\theta$ and $\theta$ is $d$-closed.\\
A locally conformally K\"ahler metric is called \emph{Vaisman} if the Lee form is parallel with respect to the Levi-Civita connection $\nabla^{LC}$, namely
$$
\nabla^{LC}\theta=0\,.
$$
\end{definition}
In particular, it is immediate to see that Vaisman metrics are Gauduchon and the norm of the Lee form $|\theta|$ with respect to $\omega$ is constant.\\

The Chern and Bismut connections connections are related to the Levi-Civita connection $\nabla^{LC}$ by
$$
\begin{aligned}
g(\nabla^B_xy,z)&=g(\nabla^{LC}_xy,z)+\frac{1}{2}d^c\omega(x,y,z)\,,\\
g(\nabla^{Ch}_xy,z)&=g(\nabla^{LC}_xy,z)+\frac{1}{2}d\omega(Jx,y,z)\,.
\end{aligned}
$$
If we denote with
$$
\begin{aligned}
R(x,y)z &=\nabla_x\nabla_yz-\nabla_y\nabla_xz-\nabla_{[x,y]}z\,,\\
R(x,y,z,u) &=g(R(x,y)z,u)\,,
\end{aligned}
$$
the curvature tensor of type $(1,3)$ and $(0,4)$, respectively, of a connection $\nabla$ then we have the following identities involving the torsion and the curvature of the Bismut connection (cf. \cite{ivanov-papadopoulos}) which will be useful in the following (cf.  \cite[Formulas (3.20), (3.21)]{ivanov-papadopoulos})
\begin{equation}\label{formula:3-20-ivanov-papadopoulos}
\begin{aligned}
dT^B(x,y,z,u)=&\,\sigma_{x,y,z}\left\lbrace(\nabla^B_xT^B)(y,z,u)+
2g\left(T^B(x,y),T^B(z,u)\right)\right\rbrace\\
& -(\nabla^B_uT^B)(x,y,z)\,,
\end{aligned}
\end{equation}
\begin{equation}\label{formula:3-21-ivanov-papadopoulos}
\begin{aligned}
\sigma_{x,y,z}R^B(x,y,z,u)=&\, dT^B(x,y,z,u)+(\nabla^B_uT^B)(x,y,z)\\
& -\sigma_{x,y,z}g\left(T^B(x,y),T^B(z,u)\right)\,.
\end{aligned}
\end{equation}
In particular, this shows that the Bismut connection does not satisfy the first Bianchi identity in general. We recall the following definition (cf. \cite[Definition 4]{angella-otal-ugarte-villacampa}, \cite{yang-zheng-kahler-like}, \cite{zhao-zheng})

\begin{definition}
The Bismut connection $\nabla^B$ is called \emph{K\"ahler-like} if it satisfies the first  Bianchi identity
\begin{equation}\label{first-bianchi}
\sigma_{x,y,z}R^B(x,y,z)=0
\end{equation}
and the type condition
\begin{equation}\label{J-invariance}
R^B(x,y,z,w)=R^B(Jx,Jy,z,w)\,.
\end{equation}
\end{definition}

In \cite{angella-otal-ugarte-villacampa} the authors study these two conditions for the canonical connections considered by Gauduchon on $6$-dimensional solvmanifolds with invariant complex structures, trivial canonical bundle and  invariant Hermitian metrics.\\
In \cite{zhao-zheng}  Zhao and Zheng  show that if the curvature tensor of the Bismut connection  satisfies the symmetry conditions  $$
R^B(x,y,z,w) = R^B(z,y,x,w)\,,\quad R^B(x,y,Jz,Jw) = R^B(x,y,z,w)
$$
for any tangent vectors $x, y, z, w$ in  $X$, then the  Hermitian metric must be SKT.
We will show that the  previous condition for the  curvature of the Bismut connection to be symmetric when the first and the third position are interchanged is stronger than first Bianchi identity.
In fact, in a private communication Zhao and Zheng showed that those two conditions are equivalent to the vanishing of the curvature $R^B$.

\section{Bismut K\"aher-like condition} 

In this section we investigate the properties (\ref{first-bianchi}) and (\ref{J-invariance}) in the definition of Bismut K\"ahler like Hermitian metrics.
In particular, we focus on the relations  between   the  first Bianchi identity for the Bismut connection, the SKT condition and the parallelism of the  torsion of the Bismut connection, specializing then our considerations to dimension $2$, giving a characterization for Vaisman metrics.

\begin{theorem}\label{thm:skt-bianchi-then-parallel}
Let $X$ be a complex manifold with a SKT Hermitian metric $g$ such that the Bismut connection satisfies the first Bianchi identity. Then,
$$
\nabla^BT^B=0.
$$
\end{theorem}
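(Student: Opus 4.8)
The plan is to extract everything from the two curvature--torsion identities \eqref{formula:3-20-ivanov-papadopoulos} and \eqref{formula:3-21-ivanov-papadopoulos} of Ivanov--Papadopoulos, together with the two hypotheses: $g$ is SKT (so $dT^B=0$) and $\nabla^B$ satisfies the first Bianchi identity \eqref{first-bianchi}. First I would rewrite identity \eqref{formula:3-20-ivanov-papadopoulos} using $dT^B=0$, which gives
\begin{equation*}
(\nabla^B_uT^B)(x,y,z)=\sigma_{x,y,z}\left\{(\nabla^B_xT^B)(y,z,u)+2g\left(T^B(x,y),T^B(z,u)\right)\right\}.
\end{equation*}
Next I would exploit the first Bianchi identity to simplify \eqref{formula:3-21-ivanov-papadopoulos}: since $\sigma_{x,y,z}R^B(x,y,z,u)=0$ and $dT^B=0$, that identity collapses to
\begin{equation*}
(\nabla^B_uT^B)(x,y,z)=\sigma_{x,y,z}g\left(T^B(x,y),T^B(z,u)\right).
\end{equation*}
So I now have two distinct expressions for the same quantity $(\nabla^B_uT^B)(x,y,z)$, one built from the cyclic sum of covariant derivatives of $T^B$ plus twice a quadratic torsion term, the other equal to a single quadratic torsion term.

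Subtracting, the cyclic sum of covariant derivatives must equal the negative of the quadratic torsion term:
\begin{equation*}
\sigma_{x,y,z}(\nabla^B_xT^B)(y,z,u)=-\sigma_{x,y,z}g\left(T^B(x,y),T^B(z,u)\right).
\end{equation*}
At this point the key structural observation I want to use is that $T^B$ is totally skew-symmetric (it is the $3$-form $d^c\omega$), and since $\nabla^B$ is metric and parallelizes $J$, the covariant derivative $\nabla^B T^B$ is again totally skew in its last three arguments. I would therefore reinterpret $\sigma_{x,y,z}(\nabla^B_xT^B)(y,z,u)$ as (a multiple of) the full alternation of $\nabla^B T^B$, and relate it to $dT^B$ via the standard formula expressing the exterior derivative of a form in terms of a metric connection and its torsion. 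The plan is to combine this with $dT^B=0$ to show that the cyclic-sum term on the left vanishes, which then forces the quadratic torsion term $\sigma_{x,y,z}g(T^B(x,y),T^B(z,u))$ to vanish as well, and hence by the simplified \eqref{formula:3-21-ivanov-papadopoulos} that $(\nabla^B_uT^B)(x,y,z)=0$ for all arguments.

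I expect the main obstacle to be the bookkeeping in the penultimate step: carefully tracking the skew-symmetrization so that the ``cyclic sum of $\nabla^B T^B$'' term and the exterior derivative $dT^B$ are genuinely the same object up to an explicit combinatorial constant, rather than merely looking alike. The total skew-symmetry of $T^B$ and the fact that $\nabla^B$ has totally skew torsion are essential here, because they guarantee that the naive cyclic sum over $x,y,z$ already accounts for the full antisymmetrization needed to identify it with $dT^B$. Once that identification is pinned down, the vanishing of $dT^B$ does the work and $\nabla^B T^B=0$ follows. An alternative, more computational route would be to work in a local unitary frame, expand both Ivanov--Papadopoulos identities in components, and solve the resulting linear system for the components of $\nabla^B T^B$; I would keep this in reserve as a fallback if the invariant argument proves too delicate to make rigorous.
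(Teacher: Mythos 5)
Your first three steps coincide with the paper's: both Ivanov--Papadopoulos identities are specialized using $dT^B=0$ and the first Bianchi identity, giving $(\nabla^B_uT^B)(x,y,z)=\sigma_{x,y,z}g(T^B(x,y),T^B(z,u))$ and, after subtraction, $\sigma_{x,y,z}(\nabla^B_xT^B)(y,z,u)=-\sigma_{x,y,z}g(T^B(x,y),T^B(z,u))$. The problem is the finishing move. The cyclic sum $\sigma_{x,y,z}(\nabla^B_xT^B)(y,z,u)$ is \emph{not} a multiple of the full alternation of $\nabla^BT^B$: the tensor $(v;x,y,z)\mapsto(\nabla^B_vT^B)(x,y,z)$ is a priori skew only in its last three slots, and its alternation over all four arguments is, up to normalization, $\sigma_{x,y,z}(\nabla^B_xT^B)(y,z,u)-(\nabla^B_uT^B)(x,y,z)$ --- the extra fourth term is precisely the quantity the theorem is about, so it cannot be dropped. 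Moreover, the ``standard formula expressing $d$ in terms of a metric connection with totally skew torsion,'' applied to the $3$-form $T^B$, \emph{is} identity \eqref{formula:3-20-ivanov-papadopoulos}; invoking it together with $dT^B=0$ only reproduces the equation you already have, so this route is circular and does not show that the cyclic-sum term vanishes on its own.

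The missing ingredient is the observation that $\lambda(x,y,z,u):=\sigma_{x,y,z}g(T^B(x,y),T^B(z,u))$ is totally skew in \emph{all four} arguments (a direct check using the skewness of $T^B$ and the symmetry of $g$). Granting this, the simplified \eqref{formula:3-21-ivanov-papadopoulos} reads $(\nabla^B_uT^B)(x,y,z)=\lambda(x,y,z,u)$, so $\nabla^BT^B$ is itself a $4$-form; each summand of the cyclic sum then equals $-\lambda(x,y,z,u)$ (each reordering of the four arguments involved is an odd permutation), so your subtracted identity becomes $-3\lambda=-\lambda$, forcing $\lambda=0$ and hence $\nabla^BT^B=0$. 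The paper instead finishes by rewriting the subtracted identity as $\sigma_{x,y,z}(\nabla^B_xT^B)(y,z,u)=-(\nabla^B_uT^B)(x,y,z)$ and comparing the symmetry types of the two sides in $x,y,z$. Either way, some argument exploiting the symmetries of the quadratic torsion term (or of $\nabla^BT^B$ itself) is indispensable, and that is the step your proposal does not supply; as written, the plan stalls exactly at the point you flagged as the main obstacle.
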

\begin{proof}
By hypothesis, $dT^B=0$ and the  first Bianchi-identity holds, hence by Formula (\ref{formula:3-21-ivanov-papadopoulos}) one has that for any tangent vectors $x,y,z,u$,
$$
(\nabla_uT^B)(x,y,z)=\sigma_{x,y,z}g(T^B(x,y),T^B(z,u))
$$
and by Formula (\ref{formula:3-20-ivanov-papadopoulos})
$$
\sigma_{x,y,z}(\nabla^B_xT^B)(y,z,u)+\sigma_{x,y,z}2g(T^B(x,y),T^B(z,u))-
(\nabla_u^BT^B)(x,y,z)=0\,.
$$
Hence, one gets
$$
\sigma_{x,y,z}(\nabla^B_xT^B)(y,z,u)=-(\nabla^B_uT^B)(x,y,z).
$$
Now notice that both sides of the equality are tensorial and on the left hand side the expression is symmetric in $x,y,z$ while on the right hand side is antisymmetric in $x,y,z$. Therefore in order to be equal they must vanish. Hence, $\nabla^BT^B=0$.
\end{proof}
\begin{theorem}\label{thm:parallel-bianchi-iff-skt}
Let $X$ be a complex manifold and let $g$ be a Hermitian metric such that $\nabla^BT^B=0$. Then, the Bismut connection satisfies the  first Bianchi identity if and only if $g$ is SKT.
\end{theorem}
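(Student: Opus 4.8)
The plan is to feed the standing hypothesis $\nabla^B T^B=0$ into the two Ivanov--Papadopoulos identities \eqref{formula:3-20-ivanov-papadopoulos} and \eqref{formula:3-21-ivanov-papadopoulos} and observe that, once all the covariant-derivative terms drop out, both the SKT condition and the first Bianchi identity collapse onto the \emph{same} quadratic torsion expression. This reduces the claimed equivalence to a single algebraic comparison, with no analysis required beyond the two formulas already quoted.

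First I would simplify. Since $\nabla^B T^B=0$, every term of the form $(\nabla^B_\bullet T^B)$ vanishes, so \eqref{formula:3-20-ivanov-papadopoulos} reduces to
$$
dT^B(x,y,z,u)=2\,\sigma_{x,y,z}\,g\!\left(T^B(x,y),T^B(z,u)\right),
$$
while \eqref{formula:3-21-ivanov-papadopoulos} reduces to
$$
\sigma_{x,y,z}R^B(x,y,z,u)=dT^B(x,y,z,u)-\sigma_{x,y,z}\,g\!\left(T^B(x,y),T^B(z,u)\right).
$$
Substituting the first reduced identity into the second to eliminate $dT^B$ then yields
$$
\sigma_{x,y,z}R^B(x,y,z,u)=\sigma_{x,y,z}\,g\!\left(T^B(x,y),T^B(z,u)\right).
$$

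Next I would read off the two equivalences by introducing the shorthand $Q(x,y,z,u):=\sigma_{x,y,z}\,g\!\left(T^B(x,y),T^B(z,u)\right)$. From the reduced form of \eqref{formula:3-20-ivanov-papadopoulos} we have $dT^B=2Q$, so $g$ is SKT (that is, $dT^B=0$) precisely when $Q\equiv 0$. From the combined identity we have $\sigma_{x,y,z}R^B(x,y,z,u)=Q(x,y,z,u)$, so the first Bianchi identity \eqref{first-bianchi} holds precisely when $Q\equiv 0$. Hence each of the two conditions is equivalent to the vanishing of $Q$, and therefore they are equivalent to one another, which is the assertion. Concretely, if $g$ is SKT then $Q\equiv0$ forces $\sigma_{x,y,z}R^B=Q\equiv0$; conversely, if the first Bianchi identity holds then $Q\equiv0$ forces $dT^B=2Q=0$.

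I do not expect any genuine obstacle here: the whole argument is a direct consequence of the two quoted identities once $\nabla^B T^B=0$ is imposed, and indeed Theorem~\ref{thm:skt-bianchi-then-parallel} is the companion implication in the converse direction. The only point deserving a word of care is the last equivalence of the previous paragraph, namely that the vanishing of $Q(x,y,z,u)$ for all tangent vectors is genuinely the same as $dT^B=0$; but this is immediate from $dT^B=2Q$, since both sides are tensorial and vanish together. No positivity, compactness, or dimension hypothesis is needed, consistent with the statement being valid on an arbitrary complex manifold.
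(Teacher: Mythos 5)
Your proposal is correct and follows essentially the same route as the paper: both substitute $\nabla^BT^B=0$ into the two Ivanov--Papadopoulos identities to obtain $\sigma_{x,y,z}R^B(x,y,z,u)=\tfrac{1}{2}dT^B(x,y,z,u)$ (your $Q$ being exactly $\tfrac{1}{2}dT^B$), from which the equivalence is immediate. The only difference is cosmetic: you name the quadratic torsion term $Q$ and phrase the conclusion as two conditions each equivalent to $Q\equiv 0$, while the paper eliminates $Q$ in favor of $dT^B$ directly.
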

\begin{proof}
If $\nabla^BT^B=0$ then by Formula (\ref{formula:3-20-ivanov-papadopoulos})
$$
dT^B(x,y,z,u)=\sigma_{x,y,z}2g(T^B(x,y),T^B(z,u))
$$
and so by Formula (\ref{formula:3-21-ivanov-papadopoulos})
$$
\sigma_{x,y,z}R^B(x,y,z,u)=dT^B(x,y,z,u)-\sigma_{x,y,z}g(T^B(x,y),T^B(z,u))=
\frac{1}{2}dT^B(x,y,z,u)\,.
$$
Therefore, the Bismut connection satisfies the first Bianchi identity if and only if $g$ is SKT.
\end{proof}
Hence, by putting together Theorem \ref{thm:skt-bianchi-then-parallel} and Theorem \ref{thm:parallel-bianchi-iff-skt} we obtain the following
\begin{cor}\label{cor:bianchi-parallel-iff-skt}
Let $X$ be a complex manifold and let $g$ be a Hermitian metric such that the Bismut connection satisfies the first Bianchi identity. Then, 
$$
\nabla^BT^B=0 \quad\iff\quad g \text{ is SKT.}
$$
\end{cor}

Now we consider some relations with respect to the Levi-Civita connection
\begin{theorem}\label{thm:bianchi-skt-then-levicivita-parallel}
Let $X$ be a complex manifold and $g$ a Hermitian metric.
If the Bismut connection satisfies the first Bianchi identity and $g$ is SKT then
$$
\nabla^{LC}T^B=0.
$$
\end{theorem}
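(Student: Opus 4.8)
The plan is to build on Theorem~\ref{thm:skt-bianchi-then-parallel}, which under exactly these hypotheses already yields $\nabla^BT^B=0$, and then to transfer this parallelism from the Bismut connection to the Levi-Civita connection by exploiting the explicit relation between the two connections recalled in the Preliminaries. Writing $D_xy:=\nabla^B_xy-\nabla^{LC}_xy$, that relation reads precisely $g(D_xy,z)=\tfrac12\,T^B(x,y,z)$, so the difference tensor $D$ is itself governed by the torsion $3$-form. First I would express $\nabla^{LC}T^B$ through $\nabla^BT^B$ and $D$: since both connections act on the $3$-form $T^B$ as derivations, for all tangent vectors one has
\[
(\nabla^{LC}_uT^B)(x,y,z)=(\nabla^B_uT^B)(x,y,z)+T^B(D_ux,y,z)+T^B(x,D_uy,z)+T^B(x,y,D_uz).
\]

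The second step is purely algebraic. Using $\nabla^BT^B=0$ and evaluating the three correction terms in a local orthonormal frame $\{e_a\}$ — substituting $D_ux=\tfrac12\sum_aT^B(u,x,e_a)\,e_a$ and using the total skew-symmetry of $T^B$ — each correction term collapses to an inner product of torsions, giving
\[
(\nabla^{LC}_uT^B)(x,y,z)=\tfrac12\bigl[g(T^B(u,x),T^B(y,z))-g(T^B(u,y),T^B(x,z))+g(T^B(u,z),T^B(x,y))\bigr].
\]
Tracking carefully the skew-symmetry of $g(T^B(\cdot,\cdot),T^B(\cdot,\cdot))$ in each of its two pairs of arguments together with its invariance under interchanging the two pairs, I would rewrite the bracket as $-\sigma_{x,y,z}\,g(T^B(x,y),T^B(z,u))$.

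Finally, to annihilate this cyclic sum I would invoke Formula~\eqref{formula:3-20-ivanov-papadopoulos}: with $\nabla^BT^B=0$ it reduces to $dT^B(x,y,z,u)=2\,\sigma_{x,y,z}\,g(T^B(x,y),T^B(z,u))$, and since $g$ is SKT the left-hand side vanishes, forcing $\sigma_{x,y,z}\,g(T^B(x,y),T^B(z,u))=0$. Hence $(\nabla^{LC}_uT^B)(x,y,z)=0$ for all $x,y,z,u$, that is $\nabla^{LC}T^B=0$. The delicate point — the main obstacle — is the bookkeeping in the second step: one must use the cyclic and skew symmetries of $T^B$ just right so that the three correction terms recombine into exactly the negative of the SKT cyclic sum rather than into some other contraction of the torsion; once this identification is made the conclusion is immediate.
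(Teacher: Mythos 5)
Your proposal is correct and follows essentially the same route as the paper: get $\nabla^BT^B=0$ from Theorem~\ref{thm:skt-bianchi-then-parallel}, deduce $\sigma_{x,y,z}\,g(T^B(x,y),T^B(z,u))=0$ from the Ivanov--Papadopoulos identities under the SKT assumption, and conclude via the comparison formula relating $\nabla^{LC}T^B$ and $\nabla^BT^B$. The only (immaterial) differences are that you re-derive that comparison formula from the difference tensor $D$ rather than citing \cite[Formula (3.18)]{ivanov-papadopoulos} as the paper does --- and your sign bookkeeping checks out, reproducing (3.18) exactly after relabelling --- and that you kill the cyclic sum via \eqref{formula:3-20-ivanov-papadopoulos} instead of \eqref{formula:3-21-ivanov-papadopoulos}.
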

\begin{proof}
If the Bismut connection satisfies the first Bianchi identity and $g$ is SKT ($dT^B=0$), then by Theorem \ref{thm:skt-bianchi-then-parallel} we have
$\nabla^BT^B=0$ and so by Formula (\ref{formula:3-21-ivanov-papadopoulos})
$$
0=(\nabla^B_uT^B)(x,y,z)=\sigma_{x,y,z}g(T^B(x,y),T^B(z,u)),
$$
then by \cite[Formula (3.18)]{ivanov-papadopoulos}
$$
(\nabla^{LC}_xT^B)(y,z,u)=(\nabla^B_xT^B)(y,z,u)+\frac{1}{2}
\sigma_{x,y,z}g(T^B(x,y),T^B(z,u))=0\,.
$$
\end{proof}
Notice that if the dimension of $X$ is $2$, then (cf. \cite[(2.14)]{ivanov-papadopoulos})
$$
T^B=-*\theta
$$
hence, on a complex surface
$$
\nabla^{LC}T^B=0\quad\iff\quad \nabla^{LC}\theta=0\,.
$$
Then as a consequence of Theorem \ref{thm:bianchi-skt-then-levicivita-parallel} one has
\begin{cor}
Let $X$ be a complex surface and $g$ be a Gauduchon metric such that the Bismut connection satisfies the first Bianchi identity. Then,
$g$ is Vaisman.
\end{cor}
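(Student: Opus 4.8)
The plan is to feed the hypotheses straight into Theorem~\ref{thm:bianchi-skt-then-levicivita-parallel} and then convert the resulting parallelism of $T^B$ into parallelism of the Lee form using the dimension-two identity $T^B=-*\theta$ recorded just before the statement.

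First I would observe that on a complex surface the Gauduchon and SKT conditions coincide (as recalled in the Preliminaries), so the assumption that $g$ is Gauduchon gives that $g$ is SKT. Since by hypothesis the Bismut connection satisfies the first Bianchi identity, Theorem~\ref{thm:bianchi-skt-then-levicivita-parallel} applies and yields $\nabla^{LC}T^B=0$. Now I would invoke the identity $T^B=-*\theta$, valid in complex dimension two: because the Hodge star is parallel with respect to the Levi-Civita connection (it is built from the metric and the orientation, both $\nabla^{LC}$-parallel), the condition $\nabla^{LC}T^B=0$ is equivalent to $\nabla^{LC}\theta=0$, exactly as noted above the corollary. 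Hence the Lee form is $\nabla^{LC}$-parallel.

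It then remains to check that $g$ is locally conformally K\"ahler, so that parallelism of $\theta$ genuinely produces the Vaisman condition. By the very definition of the Lee form, $d\omega^{n-1}=\theta\wedge\omega^{n-1}$, which for $n=2$ reads $d\omega=\theta\wedge\omega$. A $\nabla^{LC}$-parallel $1$-form is in particular closed, so $d\theta=0$; thus $d\omega=\theta\wedge\omega$ with $\theta$ $d$-closed, which is precisely the lck condition. Together with $\nabla^{LC}\theta=0$ this shows that $g$ is Vaisman. The only points needing a little care are the equivalence $\nabla^{LC}T^B=0\iff\nabla^{LC}\theta=0$, which rests on the parallelism of $*$, and the remark that a parallel Lee form is automatically closed, which upgrades the surface identity $d\omega=\theta\wedge\omega$ to the full lck equation; neither presents any real difficulty, so the statement follows essentially immediately from Theorem~\ref{thm:bianchi-skt-then-levicivita-parallel}.
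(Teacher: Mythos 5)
Your proposal is correct and follows the paper's own route exactly: identify Gauduchon with SKT in complex dimension two, apply Theorem~\ref{thm:bianchi-skt-then-levicivita-parallel} to get $\nabla^{LC}T^B=0$, and translate this into $\nabla^{LC}\theta=0$ via the surface identity $T^B=-*\theta$. The extra remark that parallelism of $\theta$ forces $d\theta=0$, so that the lck condition in the definition of Vaisman is genuinely satisfied, is a small point the paper leaves implicit, and you handle it correctly.
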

In fact, we  can prove that also its converse is true and show  Theorem A.

\begin{proofof}{Theorem A}
Since Vaisman metrics are Gauduchon (or equivalently SKT in complex dimension $2$), we just need to prove that if $g$ is Vaisman then the Bismut connection satisfies the first Bianchi identity.
Recall that on complex surfaces by \cite[Appendix A]{agricola-ferreira}
$$
\nabla^BT^B=\nabla^{LC}T^B.
$$
Since, by hypothesis $\nabla^{LC}\theta=0$ then 
$\nabla^BT^B=\nabla^{LC}T^B=0$, hence by Theorem \ref{thm:parallel-bianchi-iff-skt}
the Bismut connection satisfies the first Bianchi identity.
\end{proofof}

Notice that, up to now we have not used the second part of the definition of Bismut K\"ahler-like metrics. First of all, notice that if
$$
R^B(x,y,z,w)=R^B(Jx,Jy,z,w)
$$
then for the Ricci form of the Bismut connection we have
$$
\rho^B(x,y)=\frac{1}{2}\sum R^B(x,y,e_i,Je_i)=
\frac{1}{2}\sum R^B(Jx,Jy,e_i,Je_i)=\rho^B(Jx,Jy)
$$
where $\left\lbrace e_i\right\rbrace$ denotes an orthonormal basis of the tangent space. Namely, $\rho^B$ is a $(1,1)$-form.
In particular,
$$
d(\rho^B)^{1,1}=d\rho^B=0
$$
where $(\beta)^{1,1}$ denotes the $(1,1)$ component of a $2$-form $\beta$.
Hence, since
$$
(\rho^B)^{1,1}=\rho^{Ch}+\frac{dJ\theta+JdJ\theta}{2}
$$
where $\rho^{Ch}$ is the Ricci form of the Chern connection, one has that
$$
dd^c\theta=0.
$$
As a consequence,
\begin{prop}
Let $X$ be a complex surface and let $g$ be a Hermitian metric such that
$$
R^B(x,y,z,w)=R^B(Jx,Jy,z,w)
$$
for every tangent vectors $x,y,z,w$.
Then, $g$ is locally conformally K\"ahler.
\end{prop}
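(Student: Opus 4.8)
The identity $dd^c\theta=0$ has just been derived from the hypothesis on $R^B$; the plan is to upgrade it to $d\theta=0$, which on a complex surface is exactly the lck condition, since for the Lee form one always has $d\omega=\theta\wedge\omega$. First I would record the algebraic constraint on $d\theta$: differentiating $d\omega=\theta\wedge\omega$ gives $0=d\theta\wedge\omega-\theta\wedge d\omega=d\theta\wedge\omega$, so $(d\theta)^{1,1}$ is primitive while the rest of $d\theta$ lies in $\Lambda^{2,0}\oplus\Lambda^{0,2}$.

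Next I would use $dd^c\theta=0$ together with the anticommutation $dd^c=-d^cd$ (both being $\pm 2i\partial\bar\partial$). This yields $d^c(d\theta)=0$, so the exact $2$-form $d\theta$ is simultaneously $d$-closed and $d^c$-closed; equivalently $\partial(d\theta)=\bar\partial(d\theta)=0$. Splitting by bidegree then forces each component of $d\theta$ to be separately closed: $(d\theta)^{2,0}$ is a closed, hence holomorphic, $(2,0)$-form, and $(d\theta)^{1,1}$ is a closed primitive $(1,1)$-form.

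Now I would bring in the Hodge star of the underlying Riemannian $4$-manifold. With the orientation induced by $J$, the self-dual $2$-forms are $\mathbb{R}\omega\oplus(\Lambda^{2,0}\oplus\Lambda^{0,2})_{\mathbb{R}}$ and the anti-self-dual ones are the primitive real $(1,1)$-forms. Hence $(d\theta)^{2,0}+(d\theta)^{0,2}$ is self-dual while the primitive $(d\theta)^{1,1}$ is anti-self-dual. A closed self-dual (resp.\ anti-self-dual) $2$-form is automatically co-closed, since $d^{*}=-{*}d{*}=\mp{*}d$ on such forms gives $d^{*}(d\theta)=0$; therefore both parts, and so $d\theta$ itself, are harmonic.

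The main obstacle is the final, global step: $d\theta$ is at once exact and harmonic, and I would conclude $d\theta=0$ by uniqueness of harmonic representatives, whence $\theta$ is closed and $g$ is lck. This is where compactness of $X$ is essential — pointwise, $dd^c\theta=0$ does \emph{not} force $d\theta=0$, as the holomorphic $(2,0)$-part and the primitive harmonic $(1,1)$-part of $d\theta$ need not vanish locally — so the argument genuinely rests on Hodge theory on the compact surface rather than on any algebraic identity.
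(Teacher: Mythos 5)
Your proposal is correct and, at bottom, runs on the same engine as the paper's proof: both exploit the primitivity of $d\theta$ (from differentiating $d\omega=\theta\wedge\omega$) together with $dd^c\theta=0$ to show that $d\theta$ is co-closed, hence harmonic, and then conclude $d\theta=0$ because an exact harmonic form vanishes. The difference is packaging: the paper compresses the co-closedness into the single identity $d^*d\theta=-{*}d({*}d\theta)={*}J\,dd^c\theta$, invoking Weil's formula for the Hodge star on primitive forms, whereas you unpack the same fact through the self-dual/anti-self-dual decomposition $\Lambda^+=\mathbb{R}\omega\oplus(\Lambda^{2,0}\oplus\Lambda^{0,2})_{\mathbb{R}}$, $\Lambda^-=\Lambda^{1,1}_0$, after first using $d(d\theta)=d^c(d\theta)=0$ to see that each bidegree component of $d\theta$ is separately closed. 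Your version is longer but makes transparent exactly which pointwise identities are being used. One genuinely useful point you raise explicitly: the final step (exact $+$ harmonic $\Rightarrow$ zero, or equivalently $d^*d\theta=0\Rightarrow d\theta=0$ via integration by parts) requires $X$ to be compact. The proposition as stated in the paper omits the compactness hypothesis, but the paper's own proof needs it at exactly the same spot, so your observation identifies an implicit assumption rather than a flaw in your argument.
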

\begin{proof}
Let $\omega$ be the fundamental form associated to $g$. Then, $d\omega=\theta\wedge\omega$ and so we need to prove that $d\theta=0$.\\
First of all, notice that in any dimension $d\theta$ is a primitive $2$-form, indeed differentiating $d\omega^{n-1}=\theta\wedge\omega^{n-1}$ one has
$$
0=d\theta\wedge\omega^{n-1}=L^{n-1}d\theta\,.
$$
where $L=\omega\wedge-$ is the operator of multiplication by $\omega$ acting on forms.
Therefore, we compute
$$
d^*d\theta=-*d(*d\theta)=*JJ^{-1}dJd\theta=*Jdd^c\theta,
$$
where in the second equality we have used for instance \cite[Formula (A8)]{vaisman}.
Then,
$$
dd^c\theta=0\quad\iff\quad d^*d\theta=0 \quad\iff\quad d\theta=0.
$$
\end{proof}

\subsection{SKT metrics with Lee potential}\label{subs:LP}

In \cite{belgun-threefolds} a generalization of lck metrics with potential (and so also of Vaisman metrics) is introduced and it is shown that these metrics exist on Calabi-Eckmann manifolds.

\begin{definition} $($\cite{belgun-threefolds,yau-zhao-zheng}$)$  A Hermitian manifold  $(X, J, g)$  is called  \emph{LP} (or equivalently it has  \emph{Lee potential})  if  the $(1,0)$-part of the Lee form $\theta$ of $g$, namely $\eta:= \theta^{1,0}$, satisfies
$$
\eta\neq 0\,, \quad \del\eta=0\,, \quad \del\omega=c\,\eta\wedge\del\bar\eta\,,
$$ 
for some non-zero constant $c$.\\
If in addition $\nabla^BT^B=0$ the metric $g$ is called \emph{Generalized Calabi-Eckmann} (\emph{GCE} for short).
\end{definition}

As a consequence of Theorem \ref{thm:parallel-bianchi-iff-skt} the Bismut connection of a Generalized Calabi-Eckmann SKT metric satisfies the first Bianchi identity.

We show that if a SKT metric is LP then all its powers are $\del\delbar$-closed.
\begin{prop}
Let $X$ be a complex manifold of complex dimension $n$ endowed with a SKT metric $\omega$ with Lee potential. Then,
$$
\del\delbar\omega^k=0, \qquad \forall 1\leq k\leq n-1\,.
$$
In particular, $\omega$ is $k^{\text{th}}$-Gauduchon for every $1\leq k\leq n-1$ i.e., 
$$
\del\delbar(\omega^k)\wedge\omega^{n-k-1}=0,
$$
and astheno-K\"ahler, i.e., $\del\delbar\omega^{n-2}=0$.
\end{prop}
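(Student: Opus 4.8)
The plan is to reduce the entire statement to the single quadratic identity
$$
\del\bar\eta\wedge\delbar\eta=0 \qquad (\star)
$$
in the $(1,0)$-form $\eta=\theta^{1,0}$, and then to observe that every term in the Leibniz expansion of $\del\delbar\omega^{k}$ carries the left-hand side of $(\star)$ as a factor. Note at the outset that the SKT hypothesis $dd^c\omega=0$ is equivalent to $\del\delbar\omega=0$, which is precisely the case $k=1$; the content lies in the higher powers.

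To establish $(\star)$ I would use only the LP data. From $\del\eta=0$ we get $\del\delbar\eta=-\delbar\del\eta=0$, and conjugating the relation $\del\omega=c\,\eta\wedge\del\bar\eta$, together with $\omega$ being real, gives $\delbar\omega=\bar c\,\bar\eta\wedge\delbar\eta$. Applying $\del$ and discarding the term containing $\del\delbar\eta=0$ yields
$$
\del\delbar\omega=\bar c\,\del\bar\eta\wedge\delbar\eta .
$$
Since $\omega$ is SKT the left-hand side vanishes, and because $c\neq0$ this forces $(\star)$.

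For a general power I would expand $\del\delbar\omega^{k}=\del\!\left(k\,\omega^{k-1}\wedge\delbar\omega\right)=k\bar c\,\del\!\left(\omega^{k-1}\wedge\bar\eta\wedge\delbar\eta\right)$ by the Leibniz rule. One contribution has $\del$ acting on $\bar\eta\wedge\delbar\eta$, which produces $\del\bar\eta\wedge\delbar\eta$ once the term containing $\del\delbar\eta=0$ is dropped; the other has $\del$ acting on $\omega^{k-1}$, producing the factor $\del\omega=c\,\eta\wedge\del\bar\eta$ wedged against $\bar\eta\wedge\delbar\eta$, and after commuting the degree-one form $\bar\eta$ past the degree-two form $\del\bar\eta$ the block $\del\bar\eta\wedge\delbar\eta$ reappears. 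Both contributions vanish by $(\star)$, so $\del\delbar\omega^{k}=0$ for every $1\le k\le n-1$.

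The remaining assertions are then formal: wedging $\del\delbar\omega^{k}=0$ with $\omega^{n-k-1}$ gives the $k^{\text{th}}$-Gauduchon condition, and the choice $k=n-2$ (which is vacuous for $n=2$) gives astheno-K\"ahler. The only point requiring care is the sign bookkeeping in the Leibniz expansion and in the reordering of the one- and two-forms $\eta,\bar\eta,\del\bar\eta,\delbar\eta$, so as to confirm that the block $\del\bar\eta\wedge\delbar\eta$ genuinely factors out of each term; with $(\star)$ in hand, nothing deeper is involved.
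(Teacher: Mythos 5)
Your argument is correct and follows essentially the same route as the paper: both proofs reduce everything to the identity $\del\bar\eta\wedge\delbar\eta=0$, obtained from $0=\del\delbar\omega=\bar c\,\del\bar\eta\wedge\delbar\eta$ using $\del\eta=0$, and then observe that every term in the Leibniz expansion of $\del\delbar\omega^{k}$ (equivalently, the term $\delbar\omega\wedge\del\omega=|c|^{2}\,\bar\eta\wedge\delbar\eta\wedge\eta\wedge\del\bar\eta$ in the paper's formula) contains this vanishing block as a factor. The only cosmetic difference is that the paper first writes the general identity $\del\delbar\omega^{k}=k(\del\delbar\omega\wedge\omega-(k-1)\delbar\omega\wedge\del\omega)\wedge\omega^{k-2}$ before substituting the LP data, while you substitute first and expand afterwards.
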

\begin{proof}
First of all, notice that for any $k$
$$
\del\delbar\omega^k=k(\del\delbar\omega\wedge\omega-
(k-1)\delbar\omega\wedge\del\omega)\wedge\omega^{k-2}
$$
Since, $\omega$ is SKT and LP one has that
$$
\del\delbar\omega=0\,,\qquad
\delbar\omega\wedge\del\omega=
|c|^2\bar\eta\wedge\delbar\eta\wedge\eta\wedge\del\bar\eta\,,
$$
hence
$$
\del\delbar\omega^k=-k(k-1)|c|^2(\bar\eta\wedge\delbar\eta\wedge\eta\wedge\del\bar\eta)
\wedge\omega^{k-2}\,.
$$
Moreover, by the SKT and LP conditions
one has also
$$
\del\eta=0,\quad
0=\del\delbar\omega=\bar c\del\bar\eta\wedge\delbar\eta
$$
concluding the proof.
\end{proof}
\begin{rem}
Observe that in \cite[Remark 2]{yau-zhao-zheng} it was proved that a Hermitian metric that is both SKT and Gauduchon then it is $k^{\text{th}}$-Gauduchon.
\end{rem}

The LP assumption is fundamental in the previous Proposition, indeed in the following example we exhibit a manifold with a SKT metric $\omega$ which is not LP and $\del\delbar\omega^{n-2}\neq 0$, namely it is not astheno-K\"ahler.

\begin{example}
Let $(G,J)$ be the $4$-dimensional nilpotent Lie group equipped with a left invariant complex structure $J$ with structure equations
$$
\left\lbrace
\begin{array}{lcl}
d\,\varphi^1 & =& 0\\
d\,\varphi^2 & =& 0\\
d\,\varphi^3 & =& \lambda_1\varphi^{1\bar 1}+i\,a\varphi^{2\bar 2}\\
d\,\varphi^4 & =& \lambda_2\varphi^{2\bar 2}\,.
\end{array}
\right.
$$
with respect to a left invariant unitary coframe
$\left\lbrace\varphi^i\right\rbrace_{i=1,\cdots,4}$, where $\lambda_1,\lambda_2, a$ are real numbers and $\lambda_1,\lambda_2>0$.
It is easy to see that left-invariant Hermitian metric $
\omega:=\frac{i}{2}\sum_{j=1}^4\varphi^j\wedge\bar\varphi^j$ is SKT.
Now we show that it is not LP.\\
By explicit computation one can show that the Gauduchon torsion $(1,0)$-form
$\eta$ is
$$
\eta=\lambda_2\varphi^4+(\lambda_1-ia)\varphi^3.
$$
In particular, computing
$$
\eta\wedge\del\bar\eta=-\lambda_2(-\lambda_2^2+ia\lambda_1-a^2)\varphi^{24\bar 2}+\lambda_1(\lambda_1+ia)\varphi^{14\bar1}-
(\lambda_1-ia)(-\lambda_2^2+ia\lambda_1-a^2)\varphi^{23\bar2}+
$$
$$
+
\lambda_1(\lambda_1-ia)(\lambda_1+ia)\varphi^{13\bar 1}
$$
and 
$$
\del\omega=-\lambda_1\varphi^{13\bar 1}+ia\varphi^{23\bar2}-\lambda_2\varphi^{24\bar2}
$$
One gets that $\del\omega=c\eta\wedge\del\bar\eta$ for some non-zero constant $c$ if and only if $\lambda_1=-ia=0$ but $\lambda_1>0$ so the metric $\omega$ is not LP.\\
Moreover, notice that the metric $\omega$ is not astheno-K\"ahler, indeed one has
$$
\del\delbar\omega^2=2\lambda_1\lambda_2\varphi^{123\bar1\bar2\bar4}+
2\lambda_1\lambda_2\varphi^{124\bar1\bar2\bar3}\neq 0.
$$
\end{example}

\subsection{Behavior under small deformations}

In this section we partially answer to a question proposed in \cite{angella-otal-ugarte-villacampa} about the stability of the Bismut K\"ahler-like property under small deformations of the complex structure. In particular, we show the following
\begin{prop}\label{prop:deformations}
The existence of a SKT metric with Bismut connection satisfying the first Bianchi identity and with $\nabla^BT^B=0$ on compact complex manifolds is not an open property under small deformations of the complex structure.  
\end{prop}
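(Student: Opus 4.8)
The plan is to prove non-openness by exhibiting a single compact complex manifold that carries a SKT metric with $\nabla^B T^B = 0$ and Bismut first Bianchi identity, but which admits small deformations of the complex structure on which no such metric exists. The cleanest strategy is to work on a nilmanifold or solvmanifold where all the relevant tensors can be computed from left-invariant data, so that the existence question reduces to linear algebra on the Lie algebra and its deformations. In view of Corollary \ref{cor:bianchi-parallel-iff-skt}, the condition \lq\lq SKT plus first Bianchi identity\rq\rq{} is equivalent to \lq\lq SKT plus $\nabla^B T^B=0$\rq\rq, so I only need to track two properties: closedness of $T^B$ and parallelism of $T^B$ under $\nabla^B$.

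First I would fix a central complex torus or a Kodaira-type surface as the base point of the deformation, choosing it so that the standard Hermitian metric is K\"ahler or at least SKT with parallel torsion (on a complex torus $T^B=0$ trivially, so both conditions hold vacuously). Then I would write down an explicit holomorphic family $J_t$ of complex structures, deforming $J_0$ in a direction for which the generic nearby fiber fails to admit any SKT metric at all, or fails the parallel-torsion condition. The key point is that on such homogeneous models the existence of an invariant SKT metric with $\nabla^B T^B = 0$ is governed by the structure constants, and by a theorem of Belgun-type averaging (invariant metrics suffice when testing these cohomological/tensorial obstructions on nilmanifolds), so I can verify non-existence by checking the deformed structure equations. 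Concretely, I would compute $d^c\omega_t$, hence $T^B_t$, and show that for $t\neq 0$ either $dT^B_t\neq 0$ for every invariant $\omega_t$, or that no compatible metric makes $\nabla^{B}T^B_t$ vanish.

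The main obstacle will be controlling the existence problem on the \emph{deformed} fibers rather than at the central fiber: I must rule out SKT metrics with parallel torsion for all $t$ in a punctured neighborhood, not merely exhibit one bad metric. On a nilmanifold the reduction to invariant metrics (via symmetrization) makes this tractable, since the space of invariant Hermitian metrics is finite-dimensional and the SKT and $\nabla^B T^B=0$ conditions become polynomial equations in the metric coefficients and the deformation parameter. I would therefore reduce to showing that this polynomial system has no solution for small $t\neq 0$, which is a finite computation on the model algebra. A convenient choice is to deform a K\"ahler (complex torus) structure into a non-SKT one; since SKT is not an open condition in the negative direction—there exist nilmanifolds admitting SKT metrics whose small deformations are not even SKT (see \cite{FPS})—this immediately breaks openness of the stronger Bismut K\"ahler-like property, because K\"ahler-like forces SKT by Theorem \ref{thm:parallel-bianchi-iff-skt} together with Theorem \ref{thm:skt-bianchi-then-parallel}. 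So the decisive step is to select the deformation direction from a known example where SKT degenerates, and then invoke the equivalence in Corollary \ref{cor:bianchi-parallel-iff-skt} to conclude that the full Bianchi-plus-parallel property cannot persist.
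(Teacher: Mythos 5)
Your overall logical skeleton is the right one, and it is the same as the paper's: since the first Bianchi identity together with $\nabla^BT^B=0$ forces the metric to be SKT (Theorem \ref{thm:parallel-bianchi-iff-skt}), it suffices to exhibit a central fiber carrying such a metric together with a family $J_t$ whose generic nearby fiber admits \emph{no} SKT metric at all. The paper realizes this with the Calabi--Eckmann threefold $\mathbb{S}^3\times\mathbb{S}^3\simeq\SU(2)\times\SU(2)$, whose Bismut-flat metric (from \cite{wang-yang-zheng}) trivially satisfies all the hypotheses, and with the explicit deformation $J_t$ of \cite{tardini-tomassini}, for which non-existence of SKT metrics on $X_t$ is already known.

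The gap is in your choice of central fiber: both concrete candidates you name would fail. A complex torus is useless here because its small deformations are again complex tori, hence K\"ahler, so the SKT-breaking mechanism is unavailable. A Kodaira surface (or any compact complex surface) is equally useless, because in complex dimension $2$ the SKT condition coincides with the Gauduchon condition, and Gauduchon metrics exist in every conformal class on every compact complex surface; so you can never destroy SKT by deforming a surface, and you would instead have to rule out the parallel-torsion condition directly --- by Theorem A this amounts to showing Vaisman-ness is not deformation-open, a genuinely different and harder argument that your proposal does not address. Finally, for the six-dimensional nilmanifold route via \cite{FPS}, you only verify that the central fiber is SKT; you never check that it also satisfies the first Bianchi identity and $\nabla^BT^B=0$, which is the restrictive part of the hypothesis and fails for generic SKT nilmanifolds. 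Until you produce a central fiber that demonstrably satisfies \emph{all three} conditions and simultaneously sits in a family along which SKT dies, the proof is not complete.
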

\begin{proof}
Let $X=\mathbb{S}^3\times\mathbb{S}^3$ and $\mathbb{S}^3\simeq\SU(2)$ be the Lie group of special unitary $2\times 2$ matrices and denote by $\mathfrak{su}(2)$ its Lie algebra. Denote by 
$\{e_1, e_2, e_3\}$, $\{f_1, f_2, f_3\}$  a basis of the first copy of $\mathfrak{su}(2)$, respectively of the second copy of $\mathfrak{su}(2)$ and by  
$\{e^1, e^2, e^3\}$, $\{f^1, f^2, f^3\}$ the corresponding dual co-frames. Then we have the following commutation relations:
$$
[e_1,e_2]=2e_3\,,\quad [e_1,e_3]=-2e_2,\quad [e_2,e_3]=2e_1\,,
$$
and the corresponding Cartan structure equations
\begin{equation}
\left\{\begin{array}{rcl}
            de^1 &=&          -  2\, e^2 \wedge e^3 \\[5pt]
            de^2 &=& \phantom{+} 2\, e^1 \wedge e^3 \\[5pt]
            de^3 &=&          -  2\, e^1 \wedge e^2 \\[5pt]
            df^1 &=&          -  2\, f^2 \wedge f^3 \\[5pt]
            df^2 &=& \phantom{+} 2\, f^1 \wedge f^3 \\[5pt]
            df^3 &=&          -  2\, f^1 \wedge f^2
           \end{array}\right. \;.
\end{equation}
Define a complex structure $J$ on $X$ by setting 
$$
Je_1=e_2,\,\quad Jf_1=f_2,\,\quad Je_3=f_3\,.
$$
Therefore a complex co-frame of $(1,0)$-forms for $J$ is given by 
\begin{equation}\label{complexcoframe}
\left\{\begin{array}{rcl}
            \varphi^1 &:=& e^1 + i\, e^2 \\[5pt]
            \varphi^2 &:=& f^1 + i\, f^2 \\[5pt]
            \varphi^3 &:=& e^3 + i\, f^3
\end{array}\right. \;.
\end{equation}
In particular the complex structure equations are given by
\begin{equation}\label{complexstructureequations}
\left\{\begin{array}{rcl}
            d\varphi^1 &=&     i\varphi^1\wedge\varphi^3+
            					i\varphi^1\wedge\overline{\varphi}^3 \\[5pt]
            d\varphi^2 &=&     \varphi^2\wedge\varphi^3-
            					\varphi^2\wedge\overline{\varphi}^3 \\[5pt]
            d\varphi^3 &=&     -i\varphi^1\wedge\overline{\varphi}^1+
            					\varphi^2\wedge\overline{\varphi}^2
       \end{array}\right. \;,
 \end{equation}
Note that $(X,J)$ is a central Calabi-Eckmann threefold and in \cite{wang-yang-zheng} it is showed that this complex manifold admits a Bismut-flat Hermitian metric, which in particular is SKT and satisfies the first Bianchi identity.\\
Now let $J_t$ be the almost complex structure on $X$ considered in \cite{tardini-tomassini}
defined as
\[
\left\{\begin{array}{rcl}
            \varphi^1_t &:=& \varphi^1 \\[5pt]
            \varphi^2_t &:=& \varphi^2 \\[5pt]
            \varphi^3_t &:=& \varphi^3-t\overline{\varphi}^3
       \end{array}
\right. ;
\]
then, using the structure equations \eqref{complexstructureequations}, a straightforward computation yields to
\[
\left\{\begin{array}{rcl}
            d\varphi^1_t &=& \frac{i(\bar{t}+1)}{1-\mid t\mid^2}
            \varphi ^{13}_t + \frac{i(t+1)}{1-\mid t\mid^2}
            \varphi ^{1\bar{3}}_t \\[10pt]
            d\varphi^2_t &=& \frac{1-\bar{t}}{1-\mid t\mid^2}
            \varphi ^{23}_t + \frac{t-1}{1-\mid t\mid^2}
            \varphi ^{2\bar{3}}_t \\[10pt]
            d\varphi^3_t &=& i(t-1)
            \varphi ^{1\bar{1}}_t + (t+1)
            \varphi ^{2\bar{2}}_t
           \end{array}\right. \;
\]
and consequently $J_t$ is integrable. Set $X_t=(X,J_t)$, in \cite[Remark 3.6]{tardini-tomassini} it was proven that when $|t|^2+\text{Re}\,t-\text{Im}\,t\neq 0$ then $X_t$ does not admit any SKT metric. Therefore by Corollary \ref{cor:bianchi-parallel-iff-skt}, for such values of $t$, $X_t$ does not admit any Hermitian metric whose Bismut connection satisfies the first Bianchi identity and with parallel torsion.
\end{proof}

\section{Bismut K\"ahler-like almost abelian Lie groups}\label{sect:almost-abelian}

In this Section, we study the existence of Hermitian metrics with K\"ahler-like Bismut connection on simply-connected, almost abelian Lie groups in order to give new examples besides the ones provided in \cite{angella-otal-ugarte-villacampa} on $6$-dimensional solvmanifolds with invariant complex structures, trivial canonical bundle and  invariant Hermitian metrics.
Let $G$ be a simply-connected, almost abelian Lie group
namely, its Lie algebra $\mathfrak{g}$ has a codimension-one abelian ideal $\mathfrak{n}$. In particular, notice that such a $G$ is solvable.
Let $(J,g)$ be a left-invariant Hermitian structure on $G$, therefore there exists a basis $\left\lbrace e_1,\,\cdots\,,e_{2n}\right\rbrace$ on $\mathfrak{g}$ such that, setting
$\mathfrak{n}_1=\text{Span}_{\mathbb{R}}\left\langle e_2,\,\cdots\,,e_{2n-1}\right\rangle$, one has
$$
\mathfrak{n}=\text{Span}_{\mathbb{R}}\left\langle e_1,\,\cdots\,,e_{2n-1}\right\rangle\,,
\quad
Je_1=e_{2n},\quad
J(\mathfrak{n}_1)\subset \mathfrak{n}_1\,.
$$
By \cite{lauret-valencia} the complex structure $J$ is integrable if and only if ad $e_{2n}$ leaves $\mathfrak{n}_1$
invariant, and $A := (\text{ad}\,e_{2n} )|_{\mathfrak{n}_1}$ commutes with $J_1:=J|_{\mathfrak{n}_1}$ . Hence one has
$$
\text{ad}\,e_{2n}= 
\left(\begin{array}{rcl}
           a & 0 & 0\\
           v & A & 0\\
           0 & 0 & 0
\end{array}\right) 
$$
with $a\in\mathbb{R}$, $v\in\mathfrak{n}_1$, $A\in\mathfrak{gl}(\mathfrak{n}_1)$ and $[A,J_1]=0$.\\
In particular, we have the following non-trivial Lie brackets
$$
[e_{2n},x]=Ax\,, \quad
[e_{2n},e_1]=ae_1+v
$$
for any $x\in \mathfrak{n}_1$.\\
We fix a real inner product $g$ on $\mathfrak{g}$ with an orthogonal decomposition
$$
\mathfrak{g}=\mathbb{R}e_1\oplus\mathfrak{n}_1\oplus\mathbb{R}e_{2n}
$$
and
$$
\mathfrak{n}=\mathbb{R}e_1\oplus\mathfrak{n}_1
$$
and a compatible integrable complex structure $J$ such that $Je_1=e_{2n}$ and
$J(\mathfrak{n}_1)\subset \mathfrak{n}_1$.\\
We recall that by, \cite[Lemma 4.2]{arroyo-lafuente} the metric $g$ is SKT if and only if
$$
aA+A^2+A^tA\in\mathfrak{so}(\mathfrak{n_1})\,.\\
$$
In order to study the existence of Bismut K\"ahler-like metrics we need the expression of the Bismut connection. By using the formula (see \cite{dotti-fino})
$$
2g\left(\nabla^B_xy,z\right) =  g\left([x,y]-[Jx,Jy],z\right)-
g\left([x,z]-[Jx,Jz],y\right)-
g\left([y,z]+[Jy,Jz],x\right),
$$
we get the following

\begin{lemma}
Let $G$ be an almost abelian Lie group. Then, with the previous notations, the only non-zero components of the Bismut connection are
$$
\begin{aligned}
\nabla^B_{e_1}e_1 & =   ae_{2n}\,,\\
\nabla^B_{x}e_1 & =  \sum_{j=2}^{2n-1}g(S(A)Jx\,,\,e_j)e_j+g(v,x)e_{2n}\,,\\
\nabla^B_{e_1}e_{2n}&=-ae_{1}\,,\\
\nabla^B_{x}y &=g(x,S(A)Jy)e_1+g(x,S(A)y)e_{2n}\,,\\
\nabla^B_{e_1}y &=-\sum_{j=2}^{2n-1}g(S(A)Jy,e_j)e_j\,,\\
\nabla^B_{e_{2n}}y & =\frac{1}{2}\sum_{j=2}^{2n-1}g((A-A^t)y,e_j)e_j\,,\\
\nabla^B_{x}e_{2n} & =-g(v,x)e_1-\sum_{j=2}^{2n-1}g(S(A)x,e_j)e_j\,,
\end{aligned}
$$
with $x,y\in\mathfrak{n}_1$ and $S(A):=\frac{1}{2}(A+A^t)$.\\
\end{lemma}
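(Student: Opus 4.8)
The plan is to compute $2g(\nabla^B_u w, z)$ directly from the polarization formula quoted above from \cite{dotti-fino}, letting $u, w$ run through the three types of basis vectors—namely $e_1$, a generic $x \in \mathfrak{n}_1$, and $e_{2n}$—and letting $z$ run through the orthogonal summands $\mathbb{R}e_1$, $\mathfrak{n}_1$, $\mathbb{R}e_{2n}$ so as to read off each component of $\nabla^B_u w$. First I would record the full bracket table: since $\mathfrak{n} = \mathbb{R}e_1 \oplus \mathfrak{n}_1$ is an abelian ideal, every bracket vanishes \emph{except} those in which $e_{2n}$ appears, and these are exactly $[e_{2n}, e_1] = ae_1 + v$ and $[e_{2n}, x] = Ax$ for $x \in \mathfrak{n}_1$. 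Alongside this I would record the action of $J$, that is $Je_1 = e_{2n}$, $Je_{2n} = -e_1$, and $J(\mathfrak{n}_1) \subset \mathfrak{n}_1$, together with the commutation $AJ_1 = J_1 A$ coming from integrability.

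The decisive simplification is that, in each of the three summands of the formula, a term survives only when one of its two bracket arguments is $e_{2n}$ (equivalently $Je_1$ or $-Je_{2n}$); this kills most terms at once and reduces every computation to evaluating $[e_{2n}, \cdot]$. For instance, to obtain $\nabla^B_x y$ with $x, y \in \mathfrak{n}_1$ the only surviving contributions come through $[Jx, Je_1] = [Jx, e_{2n}] = -AJx$ and its analogue with $y$, which after applying $J$-orthogonality $g(Ju, w) = -g(u, Jw)$ give $2g(\nabla^B_x y, e_1) = g(Ax, Jy) - g(Ay, Jx) = 2g(x, S(A)Jy)$, and similarly the $e_{2n}$-component yields $g(x, S(A)y)$. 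The remaining entries are obtained the same way: $\nabla^B_{e_1} e_1$ and $\nabla^B_{e_1} e_{2n}$ come from the single bracket $[e_{2n}, e_1] = ae_1 + v$, while $\nabla^B_{e_{2n}} y$ picks up only the skew part, since the first and third summands collapse to $g(Ay, z) - g(Az, y) = g((A - A^t)y, z)$.

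To make the symmetric/skew split honest I would first derive $[A^t, J_1] = 0$ from $[A, J_1] = 0$ by transposing and using $J_1^t = -J_1$; this is what lets $A Jy$ and $A^t Jy$ be recombined into $S(A)Jy$ and guarantees that the twisted brackets $[Ju, Jw]$ reduce to $J$-conjugates of untwisted ones rather than to genuinely new expressions. With this in hand the enumeration is finite and mechanical, and the same case analysis simultaneously verifies that all components not appearing in the list—for example $\nabla^B_{e_{2n}} e_1$ and $\nabla^B_{e_{2n}} e_{2n}$—vanish.

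I expect the only real obstacle to be the bookkeeping: keeping track of the signs in the $J$-twisted brackets $[Ju, Jw]$ and correctly routing the symmetric part $S(A)$ into $\nabla^B_x y$ while the skew part $\frac12(A - A^t)$ lands in $\nabla^B_{e_{2n}} y$. Once the identities $g(Ju,w) = -g(u,Jw)$, $[A, J_1] = 0$ and $[A^t, J_1] = 0$ are in place, no genuine difficulty remains; the content is entirely in the careful enumeration.
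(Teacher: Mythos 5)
Your proposal is correct and follows exactly the route the paper takes: the paper simply invokes the Dotti--Fino formula $2g(\nabla^B_xy,z)=g([x,y]-[Jx,Jy],z)-g([x,z]-[Jx,Jz],y)-g([y,z]+[Jy,Jz],x)$ and asserts the components by direct computation, which is precisely your case-by-case evaluation over the summands $\mathbb{R}e_1\oplus\mathfrak{n}_1\oplus\mathbb{R}e_{2n}$. Your key observations (only brackets involving $e_{2n}$ survive, $[A^t,J_1]=0$ follows from $[A,J_1]=0$, and the symmetric part $S(A)$ versus the skew part $\frac{1}{2}(A-A^t)$ land where they should) are all accurate, and the sample computation for $\nabla^B_xy$ checks out.
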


In particular if we assume that $A\in\mathfrak{so}(\mathfrak{n}_1)$ the non-trivial components reduce to
$$
\begin{aligned}
\nabla^B_{e_1}e_1&=ae_{2n}\,,\\
\nabla^B_{x}e_1&=g(v,x)e_{2n}\,,\\
\nabla^B_{e_1}e_{2n}&=-ae_{1}\,,\\
\nabla^B_{e_{2n}}y&=Ay\,,\\
\nabla^B_{x}e_{2n}&=-g(v,x)e_1\,.
\end{aligned}
$$
If $y\in\mathfrak{n}_1$, by explicit computations one gets that that the only non-trivial components of the Bismut curvature tensor are
$$
\begin{aligned}
R^B(e_{2n},y)e_1&=-g(v,Ay)e_{2n}\,,\\
R^B(e_{2n},y)e_{2n}&=g(v,Ay)e_{1}\,,\\
R^B(e_{1},e_{2n})e_1&=(a^2+|v|^2)e_{2n}\,,\\
R^B(e_{1},e_{2n})e_{2n}&=(-a^2-|v|^2)e_{1}
\end{aligned}
$$
together with their symmetries.
We can now prove
\begin{theorem}
Let $G$ be an almost abelian Lie group and assume that $A\in\mathfrak{so}(\mathfrak{n}_1)$. Then, the Bismut connection is K\"ahler-like if and only if $Av$ is $g$-orthogonal to $\mathfrak{n}_1$.
\end{theorem}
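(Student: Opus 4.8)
The plan is to read everything off the explicit list of nonzero Bismut curvature components displayed just above the statement, and to translate each of the two K\"ahler-like conditions into a single scalar condition on $v$ and $A$. The key preliminary remark is that since $A\in\mathfrak{so}(\mathfrak{n}_1)$ we have $A^t=-A$, so for every $y\in\mathfrak{n}_1$
$$
g(v,Ay)=g(A^t v,y)=-g(Av,y),
$$
and moreover $Av\in\mathfrak{n}_1$ because $A$ preserves $\mathfrak{n}_1$. Hence the three statements ``$g(v,Ay)=0$ for all $y\in\mathfrak{n}_1$'', ``$Av$ is $g$-orthogonal to $\mathfrak{n}_1$'', and ``$Av=0$'' are all equivalent, and it is this scalar condition that I expect both K\"ahler-like conditions to reduce to.

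First I would handle the first Bianchi identity. Since the Bianchi expression $\sigma_{x,y,z}R^B(x,y)z$ is totally antisymmetric in $x,y,z$, it suffices to evaluate it on triples of distinct vectors from the basis $\{e_1,e_{2n}\}\cup(\text{basis of }\mathfrak{n}_1)$. Using the list of nonzero components together with the metric antisymmetries $R^B(x,y,z,w)=-R^B(y,x,z,w)=-R^B(x,y,w,z)$, one checks that $R^B(e_1,e_{2n})y=0$ for $y\in\mathfrak{n}_1$, and that $R^B(e_1,y)=0$ and $R^B(y,y')=0$ for $y,y'\in\mathfrak{n}_1$; consequently the only triple producing a nonzero cyclic sum is $(e_1,e_{2n},y)$ with $y\in\mathfrak{n}_1$. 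For this triple the only surviving term is $R^B(e_{2n},y)e_1=-g(v,Ay)\,e_{2n}$, so the first Bianchi identity holds if and only if $g(v,Ay)=0$ for all $y\in\mathfrak{n}_1$.

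Next I would treat the type condition $R^B(x,y,z,w)=R^B(Jx,Jy,z,w)$. Because $J$ preserves $\mathrm{span}\{e_1,e_{2n}\}$ (with $Je_1=e_{2n}$, $Je_{2n}=-e_1$) and preserves $\mathfrak{n}_1$, the block $R^B(e_1,e_{2n},e_1,e_{2n})=a^2+|v|^2$ is automatically $J$-invariant. For the remaining nonzero block I would apply $J$ to the first two slots of $R^B(e_{2n},y,e_1,e_{2n})=-g(v,Ay)$, obtaining $R^B(-e_1,Jy,e_1,e_{2n})=-R^B(e_1,Jy,e_1,e_{2n})$, which vanishes since any curvature with a $(e_1,\mathfrak{n}_1)$-pair is zero. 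Thus the type condition again forces $g(v,Ay)=0$ for all $y\in\mathfrak{n}_1$, so remarkably both K\"ahler-like conditions reduce to the same scalar requirement.

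Finally, for the converse I would assume $g(v,Ay)=0$ for all $y$ and observe that then the $(e_{2n},y)$-components of the curvature vanish identically, leaving only the $R^B(e_1,e_{2n})$-block; for this block both the first Bianchi identity (rechecked on all basis triples, now trivially zero) and the type condition (by $J$-invariance of $\mathrm{span}\{e_1,e_{2n}\}$) hold. Combining the two implications yields that $\nabla^B$ is K\"ahler-like if and only if $Av$ is $g$-orthogonal to $\mathfrak{n}_1$. The main point of care, rather than any deep obstacle, is the bookkeeping shared by the two steps: correctly enumerating from the given list and its symmetries exactly which $(0,4)$-entries are forced to vanish, so that no hidden nonzero contribution to the Bianchi sum or to the type condition is overlooked.
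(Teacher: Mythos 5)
Your proposal is correct and follows essentially the same route as the paper: read off the listed nonzero components of $R^B$, check the first Bianchi identity and the type condition on basis triples, and observe that each reduces to $g(v,Ay)=0$ for all $y\in\mathfrak{n}_1$, i.e.\ to $Av$ being $g$-orthogonal to $\mathfrak{n}_1$. Your added remark that skew-symmetry of $A$ and $A(\mathfrak{n}_1)\subset\mathfrak{n}_1$ make this equivalent to $Av=0$ is a harmless sharpening not spelled out in the paper.
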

\begin{proof}
One can check that the first Bianchi identity holds if and only if $g(v,Ay)=0$ for any $y\in\mathfrak{n}_1$. Indeed, for instance
$$
R^B(y,e_{2n})e_1+R^B(e_{2n},e_1)y+R^B(e_1,y)e_{2n}=
g(v,Ay)e_{2n}
$$
and similarly for the other relations.
For the $J$-invariance in the first two components of $R^B(\cdot,\cdot,\cdot,\cdot)$ one has the same conclusion, for example
$$
R^B(e_{2n},y,e_1,e_{2n})-R^B(Je_{2n},Jy,e_1,e_{2n})=-g(v,Ay)
$$
and so on.
Hence, the Bismut connection is K\"ahler-like if and only if 
$g(v,Ay)=0$ for every $y\in\mathfrak{n}_1$ if and only if $Av$ is $g$-orthogonal to $\mathfrak{n}_1$.
\end{proof}
\begin{rem}
Notice that in general these Bismut K\"ahler-like metrics are not K\"ahler. 
More precisely, the torsion of the Bismut connection is given by
$$
\begin{aligned}
T^B(e_1,y)&=-g(v,y)e_{2n}\,,\\
T^B(e_{2n},y)&=g(v,y)e_{1}\,,\\
T^B(e_1,e_{2n})&=v
\end{aligned}
$$
for $y\in\mathfrak{n}_1$.\\
And so the only non-zero components of the torsion $3$-form are
$$
T^B(e_1,y,e_{2n})=-g(v,y)\,.
$$
In particular, one can check explicitly that $\nabla^BT^B=0$. 
Hence, these exist  explicit examples of metrics that are SKT, with $\nabla^BT^B=0$ but they do not satisfy the condition $R^D(x,y,z,w) = R^D(z,y,x,w),$ for every tangent vectors $x,y,z,w$.

\end{rem}

\begin{rem}
We notice that having $A\in\mathfrak{so}(\mathfrak{n}_1)$ is not necessary in order to have a Hermitian metric with Bismut K\"ahler-like connection on an almost abelian simply-connected Lie group. Indeed, by \cite{angella-otal-ugarte-villacampa} the Lie algebra $\mathfrak{h}_8$ with structure equations $(0,0,0,0,12)$ is an almost abelian Lie algebra with Bismut K\"ahler-like connection but the corresponding matrix $A$ is not antisymmetric.
\end{rem}

We now discuss the existence of a compact quotient on an explicit example in dimension $6$.
First of all notice that if $\mathfrak{g}$ is an almost abelian Lie algebra then it is unimodular (namely all the adjoint maps are traceless) if and only if $a+\text{tr}\,A=0$.\\
If $A \in\mathfrak{so}(\mathfrak{n}_1)$, then $\mathfrak{g}$ is unimodular if and only if $a=0$.
\begin{example}
Let $\mathfrak{g}$ be an almost abelian Lie algebra of dimension $6$
with non trivial brackets
$$
[e_6,e_1]=e_2\,,\quad [e_6,e_4]=e_5\,,\quad [e_6,e_5]=-e_4\,.
$$
Hence we have
$$
\tilde A:=\text{ad}\,e_{6}= 
\left(\begin{array}{rcl}
           a & 0 & 0\\
           v & A & 0\\
           0 & 0 & 0
\end{array}\right) 
$$
with $v=e_2$, $a=0$ and
$$
A=\left(\begin{array}{rclc}
           0 & 0 & 0 & 0\\
           0 & 0 & 0 & 0\\
           0 & 0 & 0 & -1\\
           0 & 0 & 1 & 0
\end{array}\right).
$$
We set $\varphi(t)=e^{t\tilde A}$, by \cite{bock} $\mathfrak{g}$ admits a lattice if and only if there exists $t_0\in\mathbb{R}$ such that
$\varphi(t_0)$ is conjugate to an integral matrix. Set $t_0:=\pi$, then
$$
\varphi(t_0)= \left(\begin{array}{rclclc}
           1 & 0 & 0 & 0 & 0 & 0\\
           \pi & 1 & 0 & 0 & 0 & 0\\
           0 & 0 & 1 & 0 & 0 & 0\\
           0 & 0 & 0 & -1 & 0  & 0\\
           0 & 0 & 0 & 0 & -1 & 0\\
           0 & 0 & 0 & 0 & 0 & 1
\end{array}\right).
$$
This matrix is conjugate to
$$
\left(\begin{array}{rclclc}
           2 & -1 & 1 & 0 & 0 & 0\\
           1 & 0 & 1 & 0 & 0 & 0\\
           0 & 0 & 1 & 0 & 0 & 0\\
           0 & 0 & 0 & -1 & 0  & 0\\
           0 & 0 & 0 & 0 & -1 & 0\\
           0 & 0 & 0 & 0 & 0 & 1
\end{array}\right).
$$
Hence, there exists a lattice $\Gamma$ in $G$ such that $\Gamma\backslash G$ is an almost-albelian solvmanifold of dimension $6$ admitting a Bismut K\"ahler-like Hermitian metric.
\end{example}

In relation  to the condition
\begin{equation}\label{symmetry13}
G^B(x,y,z,w):=R^B(x,y,z,w)-R^B(z,y,x,w)=0,
\end{equation} 
for every tangent vector $x,y,z,w$.  we can  prove the following
\begin{theorem}\label{thm:almost-abelian-skl-zz}
An almost abelian Lie group $G$ admits a left-invariant  SKT  metric satisfying \eqref{symmetry13}  if and only if
$$
a=0,\quad v=0,\quad A\in\mathfrak{so}(\mathfrak{n}_1).
$$
\end{theorem}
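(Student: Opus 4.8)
The plan is to read off both implications directly from the Bismut curvature tensor, using the connection components computed in the Lemma above. The \textbf{if} direction is the easy one. Assuming $a=0$, $v=0$ and $A\in\mathfrak{so}(\mathfrak{n}_1)$, the curvature formulas displayed above for the case $A\in\mathfrak{so}(\mathfrak{n}_1)$ give $R^B(e_{2n},y)e_1=-g(v,Ay)e_{2n}=0$ and $R^B(e_1,e_{2n})e_1=(a^2+|v|^2)e_{2n}=0$, and likewise for the remaining components and their symmetries; hence $R^B\equiv 0$. A Bismut-flat metric trivially satisfies \eqref{symmetry13}, and by \cite[Lemma 4.2]{arroyo-lafuente} it is SKT, since $aA+A^2+A^tA=A^2-A^2=0\in\mathfrak{so}(\mathfrak{n}_1)$. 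So a metric with the required properties exists.

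For the \textbf{only if} direction I would argue in two steps, the first being the crux. First I would show that \eqref{symmetry13} already forces $A\in\mathfrak{so}(\mathfrak{n}_1)$, i.e. $S(A)=0$. Using the expression for $\nabla^B$ in the Lemma together with $[x,y]=0$ for $x,y\in\mathfrak{n}_1$, one computes the restriction of $R^B$ to $\mathfrak{n}_1$: for $x,y,z,w\in\mathfrak{n}_1$,
\begin{equation*}
\begin{aligned}
R^B(x,y,z,w)={}&g(y,S(A)Jz)\,g(S(A)Jx,w)-g(y,S(A)z)\,g(S(A)x,w)\\
&-g(x,S(A)Jz)\,g(S(A)Jy,w)+g(x,S(A)z)\,g(S(A)y,w).
\end{aligned}
\end{equation*}
Here $S(A)$ is symmetric and commutes with $J_1=J|_{\mathfrak{n}_1}$ (the latter because $[A,J_1]=0$ and $J_1^t=-J_1$), so I can diagonalize it in a $J_1$-adapted orthonormal basis $\{u_k,J_1u_k\}$ with $S(A)u_k=\lambda_ku_k$ and $S(A)J_1u_k=\lambda_kJ_1u_k$. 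Evaluating $G^B(u_k,u_k,J_1u_k,J_1u_k)=R^B(u_k,u_k,J_1u_k,J_1u_k)-R^B(J_1u_k,u_k,u_k,J_1u_k)$, the first term vanishes by antisymmetry of $R^B$ in its first two slots, while the second equals $-2\lambda_k^2$, so that $G^B(u_k,u_k,J_1u_k,J_1u_k)=2\lambda_k^2$. Imposing \eqref{symmetry13} forces $\lambda_k=0$ for every $k$, hence $S(A)=0$ and $A\in\mathfrak{so}(\mathfrak{n}_1)$.

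Once $A\in\mathfrak{so}(\mathfrak{n}_1)$ is known, the second step is immediate from the reduced curvature formulas. Evaluating \eqref{symmetry13} on $(e_1,e_{2n},e_{2n},e_1)$ and using $R^B(e_{2n},e_{2n},e_1,e_1)=0$ gives
\begin{equation*}
0=G^B(e_1,e_{2n},e_{2n},e_1)=R^B(e_1,e_{2n},e_{2n},e_1)=-(a^2+|v|^2),
\end{equation*}
so, $a^2+|v|^2$ being a sum of squares, $a=0$ and $v=0$, which completes the characterization.

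The main obstacle is the first step: one must locate a single, well-chosen entry of $G^B$ whose vanishing pins down the whole symmetric endomorphism $S(A)$, and the efficient device for this is to diagonalize $S(A)$ in a $J_1$-adapted frame and test \eqref{symmetry13} on the $J_1$-invariant planes $\mathrm{span}\{u_k,J_1u_k\}$. I note, finally, that the SKT hypothesis is not actually used in the forward implication: \eqref{symmetry13} alone yields $S(A)=0$ and then $a=v=0$, and these conditions in turn guarantee the SKT property via \cite[Lemma 4.2]{arroyo-lafuente}.
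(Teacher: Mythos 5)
Your proof is correct, and it follows the same overall strategy as the paper: compute $R^B$ from the connection components in the Lemma and evaluate $G^B$ on well-chosen ``diagonal'' quadruples so that \eqref{symmetry13} yields sums of squares. The differences are in the details and are worth noting. The paper first tests $G^B(e_1,e_1,e_{2n},e_{2n})=a^2+|v|^2$ to get $a=v=0$, and only then computes $G^B(e_1,e_1,x,x)=\tfrac12\sum_j[g(S(A)Jx,e_j)]^2$ to conclude $S(A)=0$; you reverse the order and extract $S(A)=0$ from the restriction of $R^B$ to $\mathfrak{n}_1$ alone, diagonalizing $S(A)$ in a $J_1$-adapted frame and testing $G^B(u_k,u_k,J_1u_k,J_1u_k)=2\lambda_k^2$. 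Your version of that step is self-contained (it does not require knowing $a=v=0$ first), and your formula for $R^B$ on $\mathfrak{n}_1^{\otimes 4}$ checks out against the Lemma, as does the evaluation $R^B(J_1u_k,u_k,u_k,J_1u_k)=-2\lambda_k^2$. Your second step, $G^B(e_1,e_{2n},e_{2n},e_1)=-(a^2+|v|^2)$, is the same computation as the paper's up to the symmetries of $R^B$. Two further points in your write-up are welcome additions that the paper leaves implicit: the explicit verification of the SKT condition in the ``if'' direction via the Arroyo--Lafuente criterion $aA+A^2+A^tA=0$, and the observation that the SKT hypothesis is not actually used in the ``only if'' direction.
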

\begin{proof}
We have
$$
G^B(e_1,e_1,e_{2n},e_{2n})=-R^B(e_{2n},e_1,e_1,e_{2n})=-g(R^B(e_{2n},e_1)e_1,e_{2n})=
$$
$$
=-g\left(\nabla^B_{e_{2n}}\nabla^B_{e_1}e_1-
\nabla^B_{e_1}\nabla^B_{e_{2n}}e_1-
\nabla^B_{[e_{2n},e_1]}e_1,e_{2n}\right)\,.
$$
Since
$$
\nabla^B_{e_1}e_1=ae_{2n},\quad \nabla^B_{e_{2n}}e_1=0,\quad
\nabla^B_{e_{2n}}e_{2n}=0
$$
and for every $x\in\mathfrak{n}_1$
$$
\nabla^B_xe_1=\sum_{j=2}^{2n-1}g(S(A)Jx,e_j)e_j+g(v,x)e_{2n}\,,
$$
where $S(A):=\frac{1}{2}(A+A^t)$,
we have,
$$
G^B(e_1,e_1,e_{2n},e_{2n})=a^2+\sum_{j=1}^{2n-2}v_i^2,
$$
hence $G^B(e_1,e_1,e_{2n},e_{2n})=0$ if and only if $a=0$ and
$v=0$.
Similarly, assuming that $a=0$ and $v=0$ we get, for every $x\in\mathfrak{n}_1$
$$
G^B(e_1,e_1,x,x)=\frac{1}{2}\sum_{j=2}^{2n-1}\left[g(S(A)Jx,e_j)\right]^2
$$
hence $G^B(e_1,e_1,x,x)=0$ for every $x\in\mathfrak{n}_1$ if and only if
$g(S(A)Jx,e_j)=0$ for every $x\in\mathfrak{n}_1$ and every $j=1,\cdots,2n$ (notice that for $j=1$ and $j=2n$ it is obvious) if and only if $S(A)=0$.\\
Thus, if $g$  satisfies \eqref{symmetry13} then we have just proven that $a=0$, $v=0$, and $A\in\mathfrak{so}(\mathfrak{n}_1)$. Now we show the viceversa, suppose that $a=0$, $v=0$ and $A\in\mathfrak{so}(\mathfrak{n}_1)$ then
by similar computations
one can show that the only non trivial components of the Bismut connection
are, for every $y\in\mathfrak{n}_1$
$$
\nabla^B_{e_{2n}}y=\sum_{j=2}^{2n-1}g(Ay,e_j)e_j
$$
hence, by definition
$$
R^B(x,y)z=\nabla^B_{x}\nabla^B_{y}z-
\nabla^B_{y}\nabla^B_{x}z-
\nabla^B_{[x,y]}z=0
$$
for every $x,y,z\in\mathfrak{g}$, hence $R^B(x,y,z,w)=0$ for every $x,y,z,w\in\mathfrak{g}$ concluding the proof.
\end{proof}
Notice that, in particular, we have proven that an almost abelian Lie group $G$ has
$R^B(x,y)z=0$ for every $x,y,z\in\mathfrak{g}$
if and only if
$$
a=0,\quad v=0,\quad A\in\mathfrak{so}(\mathfrak{n}_1),
$$
in particular almost abelian  Lie groups  satisfying \eqref{symmetry13} are Bismut flat. In fact, we prove the following

\begin{theorem} 
An almost abelian Lie group $G$ with
$$
v=0\quad\text{and}\quad A\in\mathfrak{so}(\mathfrak{n}_1)
$$
is K\"ahler. In particular, the only almost abelian examples   satisfying \eqref{symmetry13} are K\"ahler and flat.
\end{theorem}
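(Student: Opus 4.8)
The plan is to reduce the Kähler assertion to the vanishing of the torsion of the Bismut connection, and then to identify a torsion-free Hermitian connection with the Levi-Civita connection. First I would specialize the torsion formulas recorded above (which already presuppose $A\in\mathfrak{so}(\mathfrak{n}_1)$, whence $S(A)=\frac{1}{2}(A+A^t)=0$): the only possibly nonzero component of the torsion $3$-form is $T^B(e_1,y,e_{2n})=-g(v,y)$ for $y\in\mathfrak{n}_1$. Imposing $v=0$ makes this vanish, so $T^B\equiv 0$.

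Next I would deduce the Kähler condition from $T^B=0$. The Bismut connection is by definition Hermitian, so $\nabla^B g=0$ and $\nabla^B J=0$; if in addition its torsion vanishes, then $\nabla^B$ is a torsion-free metric connection and hence coincides with the Levi-Civita connection $\nabla^{LC}$ by uniqueness. Therefore $\nabla^{LC}J=\nabla^B J=0$, i.e. $\nabla^{LC}\omega=0$, which is exactly the Kähler condition $d\omega=0$. Alternatively, since $T^B=d^c\omega$, the vanishing $d^c\omega=0$ forces $d\omega=0$ by comparing the types $(2,1)$ and $(1,2)$ of $\partial\omega$ and $\overline{\partial\omega}$. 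This proves the first statement.

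For the final assertion I would invoke Theorem \ref{thm:almost-abelian-skl-zz}: an almost abelian $G$ admitting an SKT metric satisfying \eqref{symmetry13} must have $a=0$, $v=0$ and $A\in\mathfrak{so}(\mathfrak{n}_1)$, so by the first part $g$ is Kähler and thus $\nabla^B=\nabla^{LC}$ and $R^B=R^{LC}$. Since such groups were already observed above to be Bismut flat, $R^{LC}=R^B=0$ and they are flat as well. I do not anticipate a real obstacle here: the argument is essentially immediate once the torsion formulas are in hand, and the only point requiring attention is to keep track that those formulas are stated under $A\in\mathfrak{so}(\mathfrak{n}_1)$, so that the symmetric part $S(A)$ produces no surviving torsion terms and $v=0$ suffices to kill the rest.
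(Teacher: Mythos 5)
Your proposal is correct and follows essentially the same route as the paper: both arguments reduce to showing that the Bismut torsion vanishes (the paper recomputes $\nabla^B$ and $T^B$ from scratch under $v=0$, $A\in\mathfrak{so}(\mathfrak{n}_1)$, while you specialize the torsion formulas already recorded in the preceding remark), after which $T^B=d^c\omega=0$ gives $d\omega=0$. Your explicit handling of the final flatness claim via Theorem \ref{thm:almost-abelian-skl-zz} and $\nabla^B=\nabla^{LC}$ matches the paper's intent and is, if anything, slightly more complete than what is written there.
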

\begin{proof}
If $v=0$ and $A\in\mathfrak{so}(\mathfrak{n}_1)$
then the only non-zero commutators become
$$
[e_{2n},x]=Ax,\quad [e_{2n},e_1]=ae_1
$$ 
where $x\in\mathfrak{n}_1$.
Hence, computing explicitly the Bismut connection for $(g,J)$ as before one gets that  the only non-trivial components are
$$
\nabla^B_{e_1}e_1=ae_{2n},\quad \nabla^B_{e_{1}}e_{2n}=-ae_1,\quad
\nabla^B_{e_{2n}}y=\sum_{j=2}^{2n-1}g(Ay,e_j)e_j
$$
where $y\in\mathfrak{n}_1$.\\
Computing explicitly the torsion
$$
T^B(x,y)=\nabla_xy-\nabla_yx-[x,y]
$$
one gets $T^B=0$. For instance we compute here, for
$y\in\mathfrak{n}_1$,
$$
T^B(e_{2n},y)=\sum_{j=2}^{2n-1}g(Ay,e_j)e_j-[e_{2n},y]=
\sum_{j=2}^{2n-1}g(Ay,e_j)e_j-Ay=0\,.
$$
\end{proof}

\section{Bismut-K\"ahler like condition and pluriclosed flow}

Let $(X,J)$ be a complex manifold of complex dimension $n$.
In \cite{streets-tian} the authors introduce a family of flows called \emph{Hermitian curvature flows}, among these a particular one is the so called \emph{pluriclosed flow} which is defined by the equation
$$
\frac{\del}{\del\,t}\omega(t)=-\left(\rho^B(\omega)\right)^{1,1}\,,\qquad
\omega(0)=\omega_0
$$
where $\left(\rho^B(\omega)\right)^{1,1}$ denotes the $(1,1)$-part of the Ricci form of the Bismut connection and $\omega_0$ is a fixed Hermitian metric. It is easy to see that this flow preserves the SKT condition.\\
In this section we study the behavior of the Bismut K\"ahler-like condition under the pluriclosed flow on complex surfaces and on almost abelian Lie groups considered in the previous section.

\subsection{Pluriclosed flow on Vaisman surfaces}

We recall that a Vaisman metric on a complex manifold $(X\,,J)$ is a Hermitian metric $\omega$ such that
$$
d\omega=\theta\wedge\omega\quad\text{and}\quad\nabla^{LC}\theta=0,
$$
where $\theta$ is a $1$-form.\\
In fact, a Vaisman structure on a complex manifold is uniquely determined (up to a positive constant) by its Lee form $\theta$ via the following
$$
\omega=\frac{1}{|\theta|^2}(\theta\wedge J\theta-dJ\theta).
$$
On complex dimension $2$, Belgun in \cite{belgun-surfaces} classified those compact complex surfaces admitting a Vaisman metric, and they are properly elliptic surfaces, primary and secondary Kodaira surfaces, elliptic Hopf surfaces and Hopf surfaces of class $1$.\\
Notice that in particular, on a compact complex surface, Vaisman metrics are SKT. We ask whether the Vaisman condition is preserved along the pluriclosed flow. We answer this question in the case the initial metric has constant scalar curvature.

In order to prove Theorem B we first need the following Lemma.

\begin{lemma}\label{lemma:chern-ricci-vaisman}
Let $X$ be a compact complex surface and let $\omega$ be a Vaisman metric on $X$ with Lee form $\theta$. Then, the Ricci form of the Chern connection is
$$
\rho^{Ch}=h\,dJ\theta
$$
for some $h\in\mathcal{C}^\infty(X,\mathbb{R})$.\\
Moreover, the scalar curvature of $\omega$ is constant if and only if $h$ is constant and, in particular, in such a case $c_1(X)=0$.
\end{lemma}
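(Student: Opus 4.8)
The plan is to exploit the rigid transverse geometry of a Vaisman surface. Since $\theta$ is $\nabla^{LC}$-parallel, $|\theta|$ is a positive constant, and I may rescale $\omega$ so that $|\theta|=1$ (this changes neither $\theta$, nor $\rho^{Ch}$, nor the constancy of the scalar curvature). The Lee field $\xi:=\theta^\sharp$ and the anti-Lee field $J\xi$ are then holomorphic Killing vector fields spanning the canonical foliation $\mathcal F$, and from the identity $\omega=\theta\wedge J\theta-dJ\theta$ one reads off that $dJ\theta=\theta\wedge J\theta-\omega$ is a real $(1,1)$-form which is nowhere zero and horizontal for $\mathcal F$, i.e. $\iota_\xi dJ\theta=\iota_{J\xi}dJ\theta=0$. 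As the normal bundle of $\mathcal F$ has real rank two, the horizontal $2$-forms are pointwise spanned by $dJ\theta$; hence, once I know that $\rho^{Ch}$ is horizontal as well, I obtain $\rho^{Ch}=h\,dJ\theta$ for a unique smooth real function $h$.

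The key point is therefore the horizontality of $\rho^{Ch}$, which I would prove by straightening $\xi$. Because $\xi$ is holomorphic and nowhere vanishing, $Z:=\xi^{1,0}=\tfrac12(\xi-\sqrt{-1}\,J\xi)$ is a nowhere-vanishing holomorphic vector field, and near any point I may choose holomorphic coordinates $(z,w)$ with $Z=\del/\del z$; then $\xi$ and $J\xi$ are the real and imaginary translations in $z$. Since $\xi$ and $J\xi$ are Killing, the components $g_{\alpha\bar\beta}$ do not depend on $z,\bar z$, whence
\[
\rho^{Ch}=-\sqrt{-1}\,\del\delbar\log\det(g_{\alpha\bar\beta})
=-\sqrt{-1}\,\frac{\del^2\log\det g}{\del w\,\del\bar w}\,dw\wedge d\bar w,
\]
which contains neither $dz$ nor $d\bar z$. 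As $\xi,J\xi\in\mathrm{span}_{\R}\{\del/\del z,\del/\del\bar z\}$, this gives $\iota_\xi\rho^{Ch}=\iota_{J\xi}\rho^{Ch}=0$, so $\rho^{Ch}$ is horizontal and $\rho^{Ch}=h\,dJ\theta$ with $h\in\mathcal{C}^\infty(X,\R)$.

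For the second statement, I would fix a local $\omega$-orthonormal coframe $e^1,\dots,e^4$ adapted to $\mathcal F$, so that $\omega=e^1\wedge e^2+e^3\wedge e^4$, $\theta\wedge J\theta=e^1\wedge e^2$ and hence $dJ\theta=-e^3\wedge e^4$. Tracing $\rho^{Ch}=h\,dJ\theta$ against $\omega$ yields $\Lambda\rho^{Ch}=-h$, so the Chern scalar curvature of $\omega$ is a nonzero constant multiple of $h$. On a Vaisman surface the Riemannian, Hermitian and Chern scalar curvatures differ only by terms built from $|\theta|^2$ and $\nabla^{LC}\theta$, which are constant here; thus the scalar curvature of $\omega$ is constant if and only if $h$ is constant. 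Finally, if $h$ is constant then $\rho^{Ch}=h\,d(J\theta)$ is exact, so $[\rho^{Ch}]=0$ in de Rham cohomology and $c_1(X)=0$ in $H^2(X;\R)$.

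The only genuine obstacle is the horizontality of $\rho^{Ch}$, equivalently $\iota_{\theta^\sharp}\rho^{Ch}=0$: this is exactly where the Vaisman hypothesis enters, through the global nowhere-vanishing holomorphic Killing field $\xi$, which forces $\det g$ to be a function of the transverse variable alone. Everything else reduces to the linear algebra of horizontal forms on the codimension-two foliation $\mathcal F$, together with the standard structural facts about Vaisman metrics recalled above.
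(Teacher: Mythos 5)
Your proof is correct, but it follows a genuinely different route from the paper's. For the identity $\rho^{Ch}=h\,dJ\theta$ the paper never touches the Chern connection directly: it writes $\rho^{Ch}=\rho^B+dJ\theta$, invokes the Alexandrov--Ivanov identity $\rho^B=\rho^W$ for Vaisman metrics, and then uses the symmetry of $\mathrm{Ric}^W$, its comparison with $\mathrm{Ric}^{LC}$, and $R^{LC}(\cdot,\cdot,\theta^\sharp)=0$ (from $\nabla^{LC}\theta=0$) to get $\rho^W(\theta^\sharp,J\theta^\sharp)=0$ and so kill the $\theta\wedge J\theta$-component. You instead exploit the transverse geometry: the Lee and anti-Lee fields are holomorphic Killing, you straighten $\xi^{1,0}$ in local holomorphic coordinates, and read off from $\rho^{Ch}=-\sqrt{-1}\,\del\delbar\log\det g$ that $\rho^{Ch}$ is horizontal, hence a pointwise multiple of the nowhere-vanishing horizontal $2$-form $dJ\theta$. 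Your argument is in one respect tighter: horizontality kills all mixed components of $\rho^{Ch}$ at once, whereas the paper's ansatz $\rho^W=h\,\xi\wedge J\xi+k\,\theta\wedge J\theta$ tacitly discards the mixed $J$-invariant terms. On the other hand you lean on two facts quoted without proof or reference: that $\theta^\sharp$ and $J\theta^\sharp$ are holomorphic Killing on a Vaisman manifold (standard, e.g.\ Dragomir--Ornea, but it genuinely uses $d\omega=\theta\wedge\omega$ and not only $\nabla^{LC}\theta=0$), and that the Riemannian and Chern scalar curvatures of a Hermitian surface differ by an expression in $d^*\theta$, $|\theta|^2$ and $|d\omega|^2$, all constant in the Vaisman case; the paper pins the latter down with the explicit formula $b=s-2|\theta|^2+\tfrac{1}{2}|d\omega|^2$ from Alexandrov--Ivanov. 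With those two references supplied your proof is complete; the sign discrepancy between your $dJ\theta=-e^3\wedge e^4$ and the paper's $dJ\theta=\xi\wedge J\xi$ is only a matter of conventions for $J$ acting on $1$-forms and is absorbed into $h$.
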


\begin{proofof}{Theorem B}
Let $(\omega_0,\theta_0)$ be a Vaisman structure on $X$ with constant scalar curvature; then, by Lemma \ref{lemma:chern-ricci-vaisman}, we have
$$
c_1(X)=0\,.
$$
Moreover, since $\omega_0$ is Vaisman, then
$$
\omega_0=\frac{1}{|\theta_0|_0^2}(\theta_0\wedge J\theta_0-dJ\theta_0)
$$
where with $|\cdot|_0$ we denote the norm with respect to $\omega_0$ and in particular $|\theta_0|_0$ is constant.\\
We claim that the metrics
$$
\omega_t=\frac{1}{|\theta_0|_0^2}(\theta_0\wedge J\theta_0-f(t)dJ\theta_0)\,,
$$
for a suitable positive function $f(t)$ depending only on $t$ with $f(0)=1$, are Vaisman and they are solutions of the pluriclosed flow.\\
First of all notice that the corresponding Lee form of $\omega_t$ is
$$
\theta_t=\frac{1}{f(t)}\theta_0
$$
indeed, $d\omega_t=\theta_t\wedge\omega_t$ and clearly $\theta_t$ is closed, hence they are locally conformally K\"ahler.\\
Moreover, the Lee field $\theta^{\sharp_t}_t$ with respect to $\omega_t$ is
$$
\theta^{\sharp_t}_t=\frac{1}{f(t)}\theta^{\sharp_0}_0
$$
hence by \cite[Theorem 1]{moroianu-moroianu-ornea} the metrics $\omega_t$ are Vaisman.\\
In fact, 
recall that on a compact complex manifold admitting Vaisman metrics, the Lee vector fields of all Vaisman structures are holomorphic, and coincide up to a positive multiplicative constant (cf. \cite{tsukada} and \cite{madani-moroianu-pilca}).\\
Moreover, by a straightforward computation,
$$
\omega_t^2=f(t)^2\omega^2_0
$$
therefore, the Ricci forms of the Chern connection of $\omega_t$ and
$\omega_0$ must coincide
$$
\rho^{Ch}_{\omega_t}=\rho^{Ch}_{\omega_0}\,.
$$
Hence,  the Ricci forms of the Bismut connection of $\omega_t$ and
$\omega_0$ are related by
$$
\rho^{B}_{\omega_t}=\rho^{Ch}_{\omega_t}-dJ\theta_t=
\rho^{Ch}_{\omega_0}-\frac{1}{f(t)}dJ\theta_0\,.
$$
Notice that, since $\omega_t$ is lck then $\rho^{B}_{\omega_t}$ is a closed $(1,1)$-form.\\
For the pluriclosed flow we have
$$
\frac{\partial}{\partial\,t}\omega_t=
-\frac{1}{|\theta_0|_0^2}f'(t)dJ\theta_0=d(-\frac{1}{|\theta_0|_0^2}f'(t)J\theta_0)
$$
hence it is necessary to have $c_1(X)=0$ to have $\omega_t$ solving 
$$
\frac{\partial}{\partial\,t}\omega_t=-(\rho^{B}_{\omega_t})^{1,1}
$$
because we have
$$
d(-\frac{1}{|\theta_0|_0^2}f'(t)J\theta_0)=
\rho^{Ch}_{\omega_0}-d(\frac{1}{f(t)}J\theta_0)\,
$$
so $\rho^{Ch}_{\omega_0}$ is an exact form and, up to a constant, it represents the first Chern class of $X$.\\
Now, by Lemma \ref{lemma:chern-ricci-vaisman}
$$
\rho^{Ch}_0=h\,dJ\theta_0
$$
for some constant $h$.
Hence,
$$
\rho^{B}_{\omega_t}=
\left(h-\frac{1}{f(t)}\right)dJ\theta_0\,.
$$
and so
$$
-(\rho^{B}_{\omega_t})^{1,1}=-\rho^{B}_{\omega_t}=
-\left(h-\frac{1}{f(t)}\right)dJ\theta_0\,.
$$
The equation of the pluriclosed flow reduces to find a solution $f(t)$ of
$$
\frac{1}{|\theta_0|_0^2}f'(t)dJ\theta_0=
\left(h-\frac{1}{f(t)}\right)dJ\theta_0
$$
or equivalently
$$
\left(\frac{1}{|\theta_0|_0^2}f'(t)-
h+\frac{1}{f(t)}\right)dJ\theta_0=0\,.
$$
In fact, the equation
$$
\frac{1}{|\theta_0|_0^2}f'(t)-h+\frac{1}{f(t)}=0
$$
admits a unique solution $f(t)>0$ and with $f(0)=1$. Therefore,
$(\omega_t,\theta_t)$ are Vaisman metrics and solutions of the pluriclosed flow.
\end{proofof}

\begin{rem}
We notice that, if $X$ is a compact complex surface admitting a Vaisman metric $\omega_0$ with constant scalar curvature, then the pluriclosed flow starting with $\omega_0$ preserves such a condition. Indeed,
by Lemma \ref{lemma:chern-ricci-vaisman} one has for every $t\neq 0$
$$
\rho^{Ch}_{\omega_t}=h_tdJ\theta_t
$$
for some smooth function $h_t$ on $X$, and for $t=0$
$$
\rho^{Ch}_{\omega_0}=h_0dJ\theta_0
$$
with $h_0$ constant by hypothesis.\\
Then, by the proof of Theorem B we have that
$$
\rho^{Ch}_{\omega_t}=\rho^{Ch}_{\omega_0}\quad\text{and}\quad
\theta_t=\frac{1}{f(t)}\theta_0
$$
therefore $h_t=f(t)h_0$ for every $t$, giving $h_t$ constant and so $\omega_t$ has constant scalar curvature.
\end{rem}

We now give a proof of Lemma \ref{lemma:chern-ricci-vaisman}

\begin{proofof}{Lemma \ref{lemma:chern-ricci-vaisman}}
Let $\omega$ be a Vasiman metric on $X$. Then, by
\cite[Lemma 4.4]{alexandrov-ivanov} the Ricci forms of the Bismut and Weyl connections coincide
$$
\rho^B=\rho^W\,,
$$
where the Weyl connection determined by the Hermitian structure $g$ of $X$ is the unique torsion-free connection $\nabla^W$ such that $\nabla^Wg=\theta\otimes g$. In fact, the Weyl connection is related with the Levi-Civita connection of $g$ via
$$
\nabla^W_xy=\nabla_x^{LC}y-\frac{1}{2}\theta(x)y-\frac{1}{2}\theta(y)x+
\frac{1}{2}g(x,y)\theta^\sharp\,.
$$
Since $\rho^{Ch}=\rho^B+dJ\theta=\rho^W+dJ\theta$ we just need to prove that
$$
\rho^W=h\,dJ\theta
$$
for some smooth function $h$ on $X$.\\
Since the metric is Vaisman then the Ricci tensor $\text{Ric}^W$ is symmetric and one has (cf. \cite{alexandrov-ivanov-weyl})
$$
\text{Ric}^W(x,x)=\text{Ric}^{LC}(x,x)-
\frac{1}{2}\left(|\theta|^2|x|^2-\theta(x)^2\right)
$$
where $\text{Ric}^{LC}$ is the Ricci tensor of the Levi-Civita connection.\\
Therefore, for the Ricci form of the Weyl connection we can compute
$$
\rho^W(\theta^\sharp,J\theta^\sharp)=
-\text{Ric}^W(\theta^\sharp,\theta^\sharp)=
-\text{Ric}^{LC}(\theta^\sharp,\theta^\sharp)+
\frac{1}{2}\left(|\theta|^2|\theta|^2-\theta(\theta^\sharp)^2\right)=
-\text{Ric}^{LC}(\theta^\sharp,\theta^\sharp)\,.
$$
Since $\theta$ is parallel with respect to the Levi-Civita connection
we have that $R^{LC}(x,y,\theta^\sharp)=0$ for any tangent vectors $x,y$.
Hence, $\text{Ric}^{LC}(\theta^\sharp,\theta^\sharp)=0$ and so
$$
\rho^W(\theta^\sharp,J\theta^\sharp)=0\,.
$$
Now assume, without loss of generality, that $|\theta|=1$ and take a orthonormal basis $\theta,J\theta,\xi,J\xi$.
Then, we have
$$
\omega=\xi\wedge J\xi+\theta\wedge J\theta\,,
\qquad
d J\theta=\xi\wedge J\xi\,,
$$
and
$$
\rho^W=h\xi\wedge J\xi+k\theta\wedge J\theta
$$
for some smooth functions $h,k$. Since, 
$\rho^W(\theta^\sharp,J\theta^\sharp)=0$, then $k=0$ and so
$$
\rho^W=h\xi\wedge J\xi=hdJ\theta.
$$
Now, notice that the scalar curvature $s$ of $g$ is constant if and only if the same holds for the trace $b$ of $\rho^B$. 
This follows by \cite[Formula (2.12)]{alexandrov-ivanov},
$$
b=s-2|\theta|^2+\frac{1}{2}|d\omega|^2
$$
having $|d\omega|^2=|\theta|^2$ constant for a Vaisman metric on a complex surface.
Therefore,  $b$ is constant if and only if  the same holds for the function $h$, indeed
$$
b=\rho^B(J\xi^\sharp,\xi^\sharp)+\rho^B(J(J\xi^\sharp),J\xi^\sharp)+
\rho^B(J\theta^\sharp,\theta^\sharp)+\rho^B(J(J\theta^\sharp),J\theta^\sharp)=-2h\,,
$$
where we have used that $\rho^B=\rho^W=h\xi\wedge J\xi$.
\end{proofof}

\subsection{Pluriclosed Flow on almost abelian Lie groups}

In \cite{arroyo-lafuente} it is shown that the pluriclosed flow on almost abelian Lie algebras reduces to
\[
\left\{\begin{array}{rcl}
            a' &:=&ca \\[5pt]
            v' &:=& cv+Sv-\frac{1}{2}|v|^2v \\[5pt]
            A' &:=& cA
       \end{array}
\right. ;
\]
where
$$
c=\left(\frac{k}{4}-\frac{1}{2}\right)a^2-\frac{1}{2}|v|^2\in\mathbb{R},
\quad
2k=\text{rk}(A+A^t)
$$
and
$$
S=\left(\frac{k}{4}-\frac{1}{2}\right)a^2\text{Id}_{\mathfrak{n}_1}-
\frac{1}{2}AA^t+\frac{a}{4}(A+A^t).
$$
We prove the following
\begin{theorem}
Let $G$ be a simply-connected almost abelian Lie group.
If $A_0 \in\mathfrak{so}(\mathfrak{n}_1)$, then the pluriclosed flow preserves the Bismut K\"ahler-like condition.
\end{theorem}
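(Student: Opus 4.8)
The plan is to run the evolution through the reduced ODE system for $(a,v,A)$ recalled above and to show that, under $A_0\in\mathfrak{so}(\mathfrak{n}_1)$, the algebraic condition characterizing the Bismut K\"ahler-like property is preserved; the crux will be recognizing that this reduces to a homogeneous linear ODE with zero initial value.

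First I would check that antisymmetry of $A$ is preserved. The third equation $A'=cA$ has scalar coefficient $c=c(t)$, so $A(t)=\mu(t)A_0$ with $\mu(t)=\exp\!\big(\int_0^t c\,ds\big)>0$ and $\mu(0)=1$. Hence $A(t)$ stays a positive multiple of the antisymmetric $A_0$, so $A(t)\in\mathfrak{so}(\mathfrak{n}_1)$ for all $t$ and the characterization theorem for almost abelian groups with $A\in\mathfrak{so}(\mathfrak{n}_1)$ applies at every time. In this regime $2k=\text{rk}(A+A^t)=0$, so the data simplify to $c=-\tfrac12a^2-\tfrac12|v|^2$ and $S=-\tfrac12a^2\,\text{Id}+\tfrac12A^2$; in particular the SKT condition holds automatically, as $aA+A^2+A^tA=aA\in\mathfrak{so}(\mathfrak{n}_1)$, and $S$ is a polynomial in $A(t)=\mu(t)A_0$.

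By the characterization theorem, for each $t$ the metric $\omega_t$ is Bismut K\"ahler-like exactly when $A(t)v(t)\in\mathfrak{n}_1$ is $g$-orthogonal to $\mathfrak{n}_1$, i.e. when $A(t)v(t)=0$, which (as $\mu(t)>0$) is the same as $A_0v(t)=0$. Setting $u(t):=A_0v(t)$, the K\"ahler-like hypothesis on $\omega_0$ gives $u(0)=A_0v_0=0$, and it suffices to prove $u\equiv0$. Differentiating along the $v$-flow $v'=cv+Sv-\tfrac12|v|^2v$ and using that $S$ commutes with $A_0$ (being a polynomial in $A_0$), I obtain
\[
u'=A_0v'=\Big(c-\tfrac12|v|^2\Big)A_0v+S\,A_0v=\Big[\big(c-\tfrac12|v|^2\big)\text{Id}+S\Big]u .
\]
This is a homogeneous linear ODE $u'=M(t)u$ whose coefficient matrix $M(t)$ depends only on the (smooth) curve $(a,v,A)(t)$; by uniqueness of solutions, $u(0)=0$ forces $u\equiv0$. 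Therefore $A_0v(t)=0$, hence $A(t)v(t)=\mu(t)A_0v(t)=0$ for all $t$, so $\omega_t$ remains Bismut K\"ahler-like.

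The only genuinely delicate point is the commutation $A_0S=SA_0$, which is what makes the right-hand side above linear and \emph{homogeneous} in $u$ rather than involving the full vector $v$; it is exactly this collapse of the nonlinear $v$-equation onto the invariant subspace $\ker A_0$ that yields the conclusion. The remaining steps---preservation of antisymmetry via $A'=cA$, the simplification of $c$ and $S$ when $A+A^t=0$, and the translation of ``K\"ahler-like'' into $A_0v=0$---are routine.
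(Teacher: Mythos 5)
Your proposal is correct and follows essentially the same route as the paper: preservation of antisymmetry of $A$ from the scalar equation $A'=cA$, reduction of the K\"ahler-like condition to $Av=0$ (equivalently $A_0v=0$), and the observation that $S$ commutes with $A_0$ so that $u=A_0v$ satisfies a homogeneous linear ODE with $u(0)=0$, forcing $u\equiv0$ by uniqueness. The paper phrases the same computation as $\tfrac{d}{dt}\,g(v,Ay)=g\bigl(v,A(3c\,\mathrm{Id}+\tfrac12A^2)y\bigr)$ for fixed $y\in\mathfrak{n}_1$, which is equivalent to your evolution equation for $u$.
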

\begin{proof}
Notice that if $A_0 \in\mathfrak{so}(\mathfrak{n}_1)$ then
by uniqueness of the solution $A(t)\in\mathfrak{so}(\mathfrak{n}_1)$.
Hence, if we assume $A_0 \in\mathfrak{so}(\mathfrak{n}_1)$
one gets
$$
c=-\frac{1}{2}a^2-\frac{1}{2}|v|^2,
\quad
k=0
$$
and
$$
S=-\frac{1}{2}a^2\text{Id}_{\mathfrak{n}_1}+
\frac{1}{2}A^2.
$$
We show that if $A_0 \in\mathfrak{so}(\mathfrak{n}_1)$ and at $t=0$ the metric is Bismut K\"ahler-like then it remains Bismut K\"ahler-like along the flow.
Indeed, if if $A_0 \in\mathfrak{so}(\mathfrak{n}_1)$ and at $t=0$ the metric is Bismut K\"ahler-like one has that $A_0v_0$ is orthogonal to $\mathfrak{n}_1$ and by \cite{arroyo-lafuente} $\mathfrak{n}_1$ is preserved along the flow.
We compute, for any $y\in\mathfrak{n}_1$
$$
\frac{d}{dt}\left(g(v,Ay)\right)=g(v',Ay)+g(v,A'y)=
g(cv+Sv-\frac{1}{2}|v|^2v,Ay)+cg(v,Ay)=
$$
$$
=2cg(v,Ay)-\frac{1}{2}a^2g(v,Ay)+\frac{1}{2}g(A^2v,Ay)-
\frac{1}{2}|v|^2g(v,Ay)=
$$
$$
=3cg(v,Ay)+\frac{1}{2}g(A^2v,Ay)=g\left(v,A(3c\,\text{Id}_{\mathfrak{n}_1}+
\frac{1}{2}A^2)y\right)\,,
$$
hence
$$
\frac{d}{dt}\left(g(Av,y)\right)=
g\left(Av,(3c\,\text{Id}_{\mathfrak{n}_1}+
\frac{1}{2}A^2)y\right)
$$
Then, if $A_0v_0$ is orthogonal to $\mathfrak{n}_1$ by uniqueness of the solution it remains orthogonal along the flow and so the Bismut K\"ahler-like condition is preserved.
\end{proof}


\begin{thebibliography}{12}

\bibitem{agricola-ferreira} I. Agricola, A. C. Ferreira, 
Einstein manifolds with skew torsion,
\emph{Q. J. Math.} \textbf{65} (2014), no. 3, 717--741. 

\bibitem{alexandrov-ivanov} B. Alexandrov, S: Ivanov,
Vanishing theorems on Hermitian manifolds,
\emph{Differential Geom. Appl.} \textbf{14} (2001), no. 3, 251--265.

\bibitem{alexandrov-ivanov-weyl} B. Alexandrov, S. Ivanov,
Weyl structures with positive Ricci tensor, 
\emph{Differential Geom. Appl.} \textbf{18} (2003), no. 3, 343--350. 

\bibitem{angella-otal-ugarte-villacampa} D. Angella, A. Otal, L. Ugarte, R. Villacampa, On Gauduchon connections with K\"ahler-like curvature, to appear in \emph{Commun. Anal. Geom.}, \texttt{arXiv:1809.02632 [math.DG]}, 2018

\bibitem{arroyo-lafuente} R. M. Arroyo, R. Lafuente, The long-time behavior of the homogeneous pluriclosed flow, \emph{Proc. London Math. Soc. (3)} \textbf{119}  (2019), 266--289.

\bibitem{belgun-surfaces} F. A. Belgun, On the metric structure of non-K\"ahler complex surfaces, \emph{Math. Ann.} \textbf{317}  (2000), 1--40.

\bibitem{belgun-threefolds} F. A. Belgun, On the metric structure of some non-K\"ahler complex threefolds, \texttt{arXiv: 1208.4021}

\bibitem{bismut} J.-M. Bismut,  A local index theorem for non-K\"ahler manifolds. \emph{Math. Ann.} \textbf{284} (1989), no. 4, 681--699.

\bibitem{bock} C. Bock, On low-dimensional solvmanifolds, \emph{Asian J. Math}, \textbf{20} (2016), no.~2, 199--262.

\bibitem{dotti-fino} I. Dotti,  A. Fino, HyperK\"ahler torsion structures invariant by nilpotent Lie groups. \emph{Classical Quantum Gravity} \textbf{19} (2002), no. 3, 551--562.

\bibitem{FOU} A.  Fino, A.  Otal,  L. Ugarte, Six-dimensional solvmanifolds with holomorphically trivial canonical bundle, \emph{Int. Math. Res. Not. IMRN} (2015), no. 24, 13757--13799.


\bibitem{FPS} A. Fino, M. Parton,  S.  Salamon, Families of strong KT structures in six dimensions, \emph{Comment. Math. Helv.} \textbf{79}, no. 2, 317--340  (2004)

\bibitem{gauduchon}
P. Gauduchon, La $1$-forme de torsion d'une vari\'et\'e hermitienne compacte, {\em Math. Ann.} \textbf{267} (1984), no. 4, 495--518.

\bibitem{gauduchon-bumi}
P. Gauduchon, Hermitian connections and Dirac operators, {\em Boll. Un. Mat. Ital. B (7)} \textbf{11} (1997), no. 2, suppl., 257--288.

\bibitem{ivanov-papadopoulos} S. Ivanov, G. Papadopoulos, Vanishing theorems and string backgrounds,
\emph{Classical and Quantum Gravity}, textbf{18}  (2001), no.~6, 1089--1110.

\bibitem{lauret-valencia} J. Lauret, E. A. R. Valencia, On the Chern-Ricci flow and its solitons for Lie groups, \emph{Math. Nachr.}
\textbf{288} (2015) 1512--1526.

\bibitem{madani-moroianu-pilca} F. Madani, A. Moroianu, M. Pilca, LcK structures with holomorphic Lee vector field on Vaisman-type manifolds,
\texttt{arXiv:1905.07300 [math.DG]}, 2019.

\bibitem{moroianu-moroianu-ornea} A. Moroianu, S. Moroianu, L. Ornea, Locally conformally K\"ahler manifolds with holomorphic Lee field, \emph{Differential Geom. Appl.} \textbf{60} (2018), 33--38.

\bibitem{streets-tian} J. Streets and G. Tian, A parabolic flow of pluriclosed metrics, \emph{Int. Math. Res. Not.} \textbf{16} (2010),  3101--3133.

\bibitem{streets-tian-2} J. Streets, G. Tian,  Regularity theory for pluriclosed flow. \emph{C. R. Math. Acad. Sci. Paris} \textbf{349} (2011), no. 1-2, 1--4.

\bibitem{streets-tian-3} J. Streets, G. Tian, Regularity results for pluriclosed flow. \emph{Geom. Topol.} \textbf{17} (2013), no. 4, 2389--2429.

\bibitem{tardini-tomassini}
N. Tardini, A. Tomassini, On geometric Bott-Chern formality and deformations, \emph{Ann. Mat. Pura Appl. (4)} \textbf{196}  (2017),  no.~1, 349--362.

\bibitem{tsukada} K. Tsukada, Holomorphic forms and holomorphic vector fields on compact generalized Hopf manifolds,
\emph{Compositio Mathematica}, \textbf{93} (1) (1994), 1--22

\bibitem{Ugarte} L.  Ugarte, Hermitian structures on six-dimensional nilmanifolds, Transform. Groups \textbf{12} (2007), no. 1, 175--202.


\bibitem{vaisman}
I. Vaisman, On some variational problems for $2$-dimensional Hermitian metrics, {\em Ann. Global Anal. Geom.} \textbf{8} (1990), no. 2, 137--145. 


\bibitem{wang-yang-zheng} Q. Wang, B. Yang, F. Zheng, On Bismut flat manifolds, \texttt{arXiv:1603.07058 [math.DG]}, 2016

\bibitem{yang-zheng-kahler-like} B. Yang, F. Zheng, On curvature tensors of Hermitian manifolds, \texttt{arXiv:1602.01189}, to appear in  \emph{Comm. Anal. Geom.}

\bibitem{yau-zhao-zheng} S. T. Yau, Q. Zhao, F. Zheng,
On Strominger K\"ahler-like manifolds with degenerate torsion,
 \texttt{arXiv:1908.05322 [math.DG]}.
 
 \bibitem{zhao-zheng-nilmanifold} Q. Zhao, F. Zheng,
 Complex nilmanifolds and K\"ahler-like connections,
 \texttt{arXiv:1904.09707 [math.DG]}.
 

 
 \bibitem{zhao-zheng} Q. Zhao, F. Zheng, Strominger connection and pluriclosed metrics, \texttt{arXiv:1904.06604 [math.DG]}.
 

\end{thebibliography}
\end{document}